\documentclass[12pt]{article}
\usepackage[utf8]{inputenc}
\usepackage[T1]{fontenc}
\usepackage[frenchb,english]{babel} 
\usepackage{textcomp}
\usepackage{amsmath,amssymb}
\usepackage{amsthm}
\usepackage{lmodern}

\usepackage[a4paper,margin=2cm]{geometry}

\usepackage{graphicx}             
\usepackage{xcolor}               
\usepackage{microtype,stmaryrd}          

\usepackage{hyperref}
\hypersetup{pdfstartview=XXZ}

\usepackage{bbm}
\usepackage{mathrsfs}
\usepackage{subfig}

\def\build#1_#2^#3{\mathrel{
\mathop{\kern 0pt#1}\limits_{#2}^{#3}}}
\def\llbracket{[\hspace{-.10em} [ }
\def\rrbracket{ ] \hspace{-.10em}]}

\newtheorem{theorem}{Theorem}
\newtheorem{proposition}[theorem]{Proposition}

\newtheorem{lemma}[theorem]{Lemma}

\def\w{\mathrm{w}}

\def\t{\mathcal{T}}
\def\b{\mathcal{B}}

\def\r{\mathcal{R}}

\def\S{\mathcal{S}}
\def\T{\mathbb{T}}

\def\P{\mathbb{P}}

\def\R{\mathbb{R}}

\def\cc{\mathcal{C}}
\def\ddd{\mathcal{D}}

\def\ve{{\varepsilon}}
\def\la{\longrightarrow}

\def\ov{\overline}

\def\dd{\mathrm{d}}

\def\ll{\mathcal{L}}

\def\jj{\mathcal{J}}

\def\bb{\mathbf{b}}

\def\be{\mathbf{e}}

\def\rem{\noindent{\bf Remark. }}

\author{Jean-Fran\c cois Le Gall\footnote{e-mail: jean-francois.le-gall@universite-paris-saclay.fr}} 
\title{Large deviations for the CRT}
\date{\small Universit\'e Paris-Saclay}

\begin{document}
\maketitle

\begin{abstract}
We establish large deviation principles both for the CRT in the Gromov-Hausdorff topology and
for the CRT weighted by its mass measure in the Gromov-Hausdorff-Prohorov topology. In the first case,
the rate function of a tree $\t$ is half the square of the length of $\t$. In the second case, the rate function of a 
pair $(\t,\mu)$ is half the integral with respect to the length measure of $\t$ of the inverse of the absolutely continuous
part of $\mu$. 
\end{abstract}

\section{Introduction}

The Brownian Continuum Random Tree, or CRT in short, was introduced by Aldous \cite{Al} in 1991. It has become since a central
object in probability theory, as it appears in many different settings, and in particular in scaling limits for discrete random structures,
including random trees, random graphs and other combinatorial models. For many
purposes, it is convenient to view the CRT as the tree $\t_{\be}$ coded by a normalized Brownian excursion $\be=(\be_t)_{0\leq t\leq 1}$ \cite{Al2}. We may consider the CRT $\t_{\be}$ as a random variable 
with values in the space $\T$ of equivalence classes modulo isometries of rooted compact $\R$-trees, which is 
equipped with the Gromov-Hausdorff topology (see Section \ref{sec:preli} 
for precise definitions). 

The main goal of this work is to investigate large deviation results for the CRT. In order to state our first theorem, we need to introduce 
the notion of the length measure of a rooted compact $\R$-tree. For any $\t\in\T$, we can define a $\sigma$-finite measure $\lambda_\t$
which puts no mass on the set of leaves of $\t$, and is such that the measure $\lambda_\t(I)$ of any segment $I$ of the tree is equal
to the length of this segment. The length of the tree $\t$ is then defined by $L(\t)=\lambda_\t(\t)$. It is well known that the length of the CRT is a.s. infinite (this follows for instance from the stick-breaking construction
of the CRT). 

For $\ve>0$, we write $\ve\cdot \t_{\be}$ for the tree obtained by multiplying all distances in $\t_{\be}$ by the factor $\ve$. Equivalently,
$\ve\cdot \t_{\be}$ is the tree coded by $\ve\,\be$.

\begin{theorem}
\label{LDP-GH}
The laws of $\ve\cdot \t_{\be}$, $\ve >0$, satisfy a large deviation principle with speed $\ve^{-2}$ and
good rate function
$$I(\t)=2\,L(\t)^2.$$
\end{theorem}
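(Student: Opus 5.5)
The plan is to deduce Theorem \ref{LDP-GH} by the contraction principle from Schilder-type large deviations for the normalized excursion, then identify the rate function through a variational problem. The normalized excursion $\be$ satisfies, at speed $\ve^{-2}$, a large deviation principle in $C([0,1])$ with good rate function
$$J(f)=\frac12\int_0^1 f'(t)^2\,\dd t$$
for $f$ absolutely continuous, $f(0)=f(1)=0$, $f\geq 0$, and $J=+\infty$ otherwise. I would obtain this from Schilder's theorem for Brownian bridge, or from the Vervaat transform of the bridge, which is continuous on the relevant set. The conditioning on positivity costs only a subexponential factor, so the rate is unchanged; the one delicate point is the lower bound near functions touching zero in the interior, which is handled by perturbing such functions slightly upward. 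The coding map $f\mapsto \t_f$ from nonnegative continuous functions vanishing at $0$ and $1$ to $\T$ is continuous, even $2$-Lipschitz for the sup norm against Gromov--Hausdorff distance, via the standard correspondence. The contraction principle then gives a large deviation principle for $\ve\cdot\t_\be=\t_{\ve\be}$ with good rate function
$$I(\t)=\inf\{J(f):\t_f=\t\}.$$

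It remains to show that this infimum equals $2L(\t)^2$. For the lower bound, take $f$ with $\t_f=\t$ and $J(f)<\infty$. The contour of $f$ visits each point of the skeleton of $\t$ at least twice, once going up and once going down. A co-area argument then gives
$$\int_0^1|f'(t)|\,\dd t\geq 2L(\t),$$
and Cauchy--Schwarz yields $J(f)\geq \tfrac12\big(\int_0^1|f'|\big)^2\geq 2L(\t)^2$. In particular $I(\t)=\infty$ when $L(\t)=\infty$.

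For the upper bound when $L(\t)<\infty$, note that $\t$ then has finitely or countably many branches of summable length, and leaves of zero length measure. Let $f$ be the contour function of $\t$ run at unit speed, over a total duration $2L(\t)$, and rescale time to $[0,1]$. This gives $|f'|=2L(\t)$ a.e., so $J(f)=2L(\t)^2$. If $\t$ has infinitely many branches, I would approximate it by finite subtrees. Their contour functions converge uniformly to one coding $\t$, and lower semicontinuity of $J$ is not needed, because we construct the limiting $f$ directly as the depth-first contour parametrized by the length measure.

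The main obstacle is the lower-bound identity $\int|f'|\geq 2L(\t_f)$ for an arbitrary coding function. I would prove it by writing $L(\t_f)$ as a sum over levels: for a.e.\ height $h$, let $N_h$ be the number of points of $\t_f$ at height $h$ having descendants strictly above $h$. Then $L=\int N_h\,\dd h$. Each such point forces at least one upcrossing and one downcrossing of $f$ across level $h$ inside the corresponding excursion interval. Banach's indicatrix formula then gives $\int|f'|=\int n_f(h)\,\dd h\geq \int 2N_h\,\dd h$, where $n_f(h)$ is the number of times $f$ takes the value $h$. Goodness of $I$ is inherited from the contraction principle.
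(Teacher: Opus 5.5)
Your proposal is correct, and its architecture matches the paper's. Both prove a Schilder principle for $\ve\be$ via the Vervaat transform of the bridge, apply the contraction principle through the Lipschitz coding map, and identify the rate function in two halves. The upper bound comes from equi-Lipschitz rescaled contour functions of finite subtrees; the lower bound is $\int_0^1|f'|\geq 2L(\t)$ followed by Cauchy--Schwarz.

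The genuinely different ingredient is your proof of that lower bound. The paper works segment by segment on the simple approximations $\t_{(n)}$. For each elementary segment $\llbracket x,y\rrbracket$ it picks a first-passage interval $(u,v)$ and a last-exit interval $(v',u')$, all pairwise disjoint, on each of which $\int|\dot\omega|\geq d_\t(x,y)$ by the fundamental theorem of calculus, and then lets $n\to\infty$. You work level by level instead. The closed time intervals coding the descendants of distinct points at height $h$ are disjoint, since neither point descends from the other, and each has two endpoints where $f=h$. Hence $n_f(h)\geq 2N_h$, and Banach's indicatrix formula together with the co-area identity $L(\t)=\int N_h\,\dd h$ gives the bound for $\t$ in one stroke. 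Your route is more conceptual and skips the elementary-segment bookkeeping. The cost is that it imports the indicatrix theorem and the identity $L=\int N_h\,\dd h$, whose simplest proof is again monotone approximation by the $\t_{(n)}$. The paper needs nothing beyond $\int_u^v\dot\omega=\omega(v)-\omega(u)$.

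Two minor remarks. In the upper bound, Arzel\`a--Ascoli only gives a subsequential limit, and the whole sequence need not converge. What you actually use is that the limit codes $\t$ (by continuity of $\omega\mapsto\t_\omega$) and inherits the Lipschitz constant $2L(\t)$, or equivalently lower semicontinuity of your $J$. So there is no need to build a depth-first contour directly on a tree with possibly dense branch points; $|f'|=2L(\t)$ a.e.\ then follows a posteriori from your lower bound. Also, ``conditioning on positivity costs a subexponential factor'' has no direct meaning for a null event, so the Vervaat route is the one to use. Even there, the upper bound needs care, because the transform is discontinuous at paths with a non-unique minimum. The paper handles this by showing that the infimum of $I_0$ over $\Gamma^{-1}(K)$ and over its closure agree for compact $K$, and then invoking exponential tightness.
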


Recall that the statement of the theorem means that, for any open subset $O$ of $\T$,
$$\liminf_{\ve\to 0} \ve^2\,\log \P(\ve\cdot \t_{\be}\in O)\geq -2 \inf_{\t\in O} L(\t)^2,$$
and, for any closed subset $F$ of $\T$,
$$\limsup_{\ve\to 0} \ve^2\,\log \P(\ve\cdot \t_{\be}\in F)\leq -2 \inf_{\t\in F} L(\t)^2.$$

Theorem \ref{LDP-GH} is of course reminiscent of Schilder's theorem giving a large deviation principle
for Brownian motion. Indeed, the proof relies on a version of Schilder's theorem for the normalized Brownian
excursion, and then an application of the contraction principle, using the continuity of the
mapping $\be\mapsto \t_{\be}$. The main issue in the proof is to identify the rate function $I$.

In many applications, it is important to view the CRT as equipped with a ``uniform probability measure'', also called the mass measure of the tree.
The coding of $\t_{\be}$ by the Brownian excursion $\be$ involves a mapping $t\mapsto p_{(\be)}(t)$ from $[0,1]$ onto $\t_{\be}$, which corresponds to
a ``contour exploration'' of the tree, and the mass measure $\mu_{\be}$ is then the pushforward of Lebesgue measure on $[0,1]$ under $p_{(\be)}$
($\mu_\be$ can be defined in many other ways).
In contrast with the length measure, the mass measure is supported on the set of leaves of the tree $\t_\be$.

We can then consider the pair $(\t_{\be},\mu_{\be})$ as a random variable with values in the space $\T_{\w}$ of all
weighted trees, or more precisely rooted compact $\R$-trees given with a probability measure. The space $\T_{\w}$ is
equipped with the Gromov-Hausdorff-Prohorov topology. 

\begin{theorem}
\label{Schilder-measure}
The laws of $(\t_{\ve\be}, \mu_{\ve \be})$, $\ve>0$, satisfy a large deviation principle with speed $\ve^{-2}$ and good
rate function $J(\t,\mu)$ specified as follows.
\begin{itemize}
\item[$\bullet$] If $\t=\{\rho\}$ is the trivial tree consisting only of the root,
$J(\t,\delta_{\rho})=0.$
\item[$\bullet$] If $\t$ has finite (positive) length,
\begin{equation}
\label{key-formu}
J(\t,\mu)=2 \int_\t \frac{1}{q_\t(x)}\,\lambda_\t(\dd x)
\end{equation}
where $q_\t(x)\,\lambda_\t(\dd x)$ is the absolutely continuous part in the Lebesgue decomposition 
of $\mu$ with respect to the length measure $\lambda_\t$;
\item[$\bullet$] If $\t$ has infinite length, $J(\t,\mu)=\infty$.
\end{itemize}
\end{theorem}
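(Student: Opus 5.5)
Following the strategy used for Theorem \ref{LDP-GH}, I would deduce the result from a Schilder-type large deviation principle for the normalized excursion $\be$, combined with the contraction principle. The first step is to establish, or quote, that the laws of $\ve\be$ in $C([0,1])$ satisfy a large deviation principle with speed $\ve^{-2}$ and good rate function
$$\Lambda(g)=\frac12\int_0^1 g'(t)^2\,\dd t .$$
This holds for $g$ absolutely continuous with $g(0)=g(1)=0$ and $g\geq 0$, with $\Lambda(g)=\infty$ otherwise. It follows from Schilder's theorem for the Brownian bridge (Mogulskii-type arguments) together with the fact that the positivity constraint costs nothing at exponential scale. The second step is to check that $g\mapsto(\t_g,\mu_g)$ is continuous from $\{g\in C([0,1]): g\geq 0,\ g(0)=g(1)=0\}$ to $\T_\w$ with the Gromov--Hausdorff--Prohorov topology. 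This is standard, since the correspondence induced by $p_g$ has distortion at most $4\|g-h\|_\infty$ and pushes Lebesgue measure to both mass measures. The contraction principle then gives a large deviation principle with good rate function
$$J(\t,\mu)=\inf\{\Lambda(g):(\t_g,\mu_g)=(\t,\mu)\}.$$

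The main work, and the expected obstacle, is the variational identification of this infimum. For the lower bound $\Lambda(g)\geq 2\int q^{-1}\,\dd\lambda$, I would argue as follows. If $\Lambda(g)<\infty$, then $g$ has finite total variation. Each point $x$ of the skeleton of $\t_g$ at height $h$ is visited by the contour at times where $g$ crosses level $h$ upward and downward at least once each. By the coarea/Banach indicatrix formula, $\lambda_{\t_g}$ is dominated by half the pushforward of $|g'(t)|\,\dd t$. In particular $L(\t_g)\leq \frac12\int|g'|<\infty$. Next, disintegrate along the tree. Write $N(x)\geq 2$ for the number of contour passages through a skeleton point $x$, and $\tau_i(x)$ for the corresponding times. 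The part of Lebesgue measure on $\{g'\neq 0\}$ pushes forward to a measure with density $\sum_i |g'(\tau_i(x))|^{-1}$ with respect to $\lambda$. This is at most the absolutely continuous part $q(x)$, since the set $\{g'=0\}$ is mapped to a $\lambda$-null set. Then, by Cauchy--Schwarz with $N(x)\geq 2$,
$$\frac12\int g'^2=\frac12\int_\t\sum_i|g'(\tau_i(x))|\,\lambda(\dd x)\geq\frac12\int_\t\frac{N(x)^2}{\sum_i|g'(\tau_i(x))|^{-1}}\,\lambda(\dd x)\geq 2\int_\t\frac{\lambda(\dd x)}{q(x)}.$$
If $L(\t)=\infty$, no $g$ with finite energy codes $\t$, so $J=\infty$. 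The trivial tree corresponds to $g=0$.

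For the matching upper bound, given $(\t,\mu)$ with $0<L(\t)<\infty$ and $\int q^{-1}\,\dd\lambda<\infty$, I would construct a contour function exploring $\t$ in depth-first order. Each point $x$ is traversed exactly twice, up and down, at speed $|g'|=2/q(x)$, so that each passage spends time $q(x)/2\,\dd\lambda$. The singular part of $\mu$, and the mass sitting on leaves, is distributed by letting $g$ pause ($g'=0$) at times mapped to the corresponding points. Pauses are possible only at single points, so a singular part that is diffuse on leaves or on the skeleton is first approximated. The clean approach is to do the construction exactly when $\mu_s$ is a finite sum of atoms and $\t$ has finitely many leaves. The general case then follows by approximation $(\t_n,\mu_n)\to(\t,\mu)$ with convergent energies, using lower semicontinuity of $J$ (goodness) and the fact that the infimum defining a contracted rate function is attained. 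Choosing the order of exploration at branch points is free. A delicate point is checking that lower semicontinuity is compatible with the approximation, i.e.\ that $\int q_n^{-1}\,\dd\lambda_n\to\int q^{-1}\,\dd\lambda$. I expect this approximation step, along with making the disintegration along the contour rigorous for general finite-energy $g$, to be the hardest part.
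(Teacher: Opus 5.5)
Your proposal is correct. The contraction step and the upper bound follow the paper's architecture, but your lower bound takes a genuinely different, global route.

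\textbf{Lower bound.} The paper proves $I_1(\omega)\geq 2\int_{\t} q_{\t}^{-1}\,\dd\lambda_{\t}$ in stages:
\begin{itemize}
\item on a segment, it applies the monotone change of variables of Lemma~\ref{analytic-lem1} to the increasing and decreasing parts of $\omega$, and combines them via $1/q_1+1/q_2\geq 4/(q_1+q_2)$;
\item it then treats simple trees segment by segment, and removes flat pieces to handle mass on branching points and leaves;
\item it reaches general trees by flattening the excursions of $\omega$ above $\t_{(n)}$.
\end{itemize}
You do all of this at once with the area formula for the absolutely continuous $g$, which is the non-monotone version of Lemma~\ref{analytic-lem1}. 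You also need three standard facts: $|g(\{g'=0\})|=0$; almost every level set is finite; and $\int_{\t}F\,\dd\lambda_{\t}=\int_0^\infty\sum_{x\in\t^\circ,\,d_{\t}(\rho_{\t},x)=h}F(x)\,\dd h$, which follows from simple trees by monotone convergence. Together these give $q_{\t}(x)=\sum_i|g'(\tau_i(x))|^{-1}$ for $\lambda_{\t}$-a.e.\ $x$ (equality, not just $\leq$). Your Cauchy--Schwarz step with $N(x)\geq 2$ is then exactly the paper's convexity inequality applied pointwise. So the disintegration you worried about is standard. Your route treats all finite-length trees uniformly and explains the constant: optimal codings visit each skeleton point twice at equal speeds $2/q_{\t}$. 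The price is the area and coarea formulas, which the paper sidesteps by staying with monotone one-dimensional maps.

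\textbf{Upper bound.} The paper realizes an arbitrary singular part on a simple tree exactly via Lemma~\ref{analytic-lem2}, whereas you discretize it. Your ``delicate point'' disappears with the right approximation:
\begin{itemize}
\item take $\t_n=\t_{(n)}$;
\item push the mass of each component of $\t\backslash\t_{(n)}$ to its base point, as the paper does;
\item lump the resulting singular part onto finitely many points of $\t_{(n)}$, using a partition into pieces of diameter at most $1/n$.
\end{itemize}
The absolutely continuous part of $\mu_n$ is then exactly $q_{\t}\,\lambda_{\t}$ restricted to $\t_{(n)}$. Your explicit construction therefore gives $J(\t_n,\mu_n)\leq 2\int_{\t_{(n)}}q_{\t}^{-1}\,\dd\lambda_{\t}\leq 2\int_{\t}q_{\t}^{-1}\,\dd\lambda_{\t}$. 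Since $(\t_n,\mu_n)\to(\t,\mu)$, lower semicontinuity of $J$ finishes the argument; no convergence of energies is needed, only this monotone bound.

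\textbf{Two minor remarks.}
\begin{itemize}
\item $g'=0$ does not force flat pieces. A strictly increasing absolutely continuous $g$ can have $g'=0$ on a set of positive measure and thereby deposit mass on a diffuse singular set of heights; this is exactly what Lemma~\ref{analytic-lem2} exploits. It is harmless here, since you approximate anyway.
\item Your justification of the Schilder principle for the excursion (``positivity costs nothing'') is only heuristic, because the excursion is a singular conditioning of the bridge. The paper's appendix makes it rigorous via the Vervaat transform, which you can simply invoke.
\end{itemize}
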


\rem To make the connection with Theorem \ref{LDP-GH}, note that, when $0<L(\t)<\infty$, the Cauchy-Schwarz inequality
gives
$$L(\t)\leq \Big(\int \frac{1}{q_\t(x)}\lambda_\t(\dd x)\Big)^{1/2}\Big(\int q_\t(x)\lambda_\t(\dd x)\Big)^{1/2}\leq (\frac{1}{2}\,J(\t,\mu))^{1/2}$$
so that $J(\t,\mu)\geq 2\,L(\t)^2$, and this bound is an equality when $\mu=L(\t)^{-1}\,\lambda_\t$. 

\smallskip

Again Theorem \ref{Schilder-measure} follows from Schilder's theorem and an application of the
contraction principle. The identification of the rate function $J$ is however more delicate and requires a
couple of analytic lemmas. 

The paper is organized as follows. Section \ref{sec:preli} recalls basic facts about compact $\R$-trees
and their coding by functions. Theorem \ref{LDP-GH} is proved in Section \ref{sec:LDP1}. Section \ref{sec:anal}
proves the two technical lemmas that are needed in the proof of Theorem \ref{Schilder-measure}. Finally,
Section \ref{sec:LDP2} is devoted to the proof of Theorem \ref{Schilder-measure}.

\section{Preliminaries}
\label{sec:preli}

In this section, we recall the basic facts about compact $\R$-trees that we will use.

\subsection{Compact $\R$-trees}

A compact $\R$-tree is a compact metric space $(\t,d_\t)$ such that the following properties hold for every distinct $x,y\in\t$.
\begin{itemize}
\item[\rm(i)] There is a unique isometric map $f_{x,y}$ from $[0,d_\t(x,y)]$ into $\t$ such that $f_{x,y}(0)=x$ and $f_{x,y}(d_\t(x,y))=y$.
\item[\rm(ii)] If $g$ is a continuous and injective map from $[0,1]$ into $\t$ such that $g(0)=x$ and $g(1)=y$ then $g([0,1])=f_{x,y}([0,d_\t(x,y)])$.
\end{itemize}
The range of the map $f_{x,y}$ in (i) will be denoted by $\llbracket x,y\rrbracket$ and called the segment between $x$ and $y$ (by (i), this segment is isometric to the line interval 
$[0,d_\t(x,y)]$).
By definition, the interior of this segment is $\rrbracket x,y\llbracket=\llbracket x,y\rrbracket \backslash\{x,y\}$. 
%, and we also use the obvious notation $\llbracket x,y\llbracket$. 

We will consider rooted compact $\R$-trees, meaning that there is a distinguished point $\rho_\t$ of $\t$ called the root of $\t$. If $x,y\in \t$
are such that $x\in\llbracket \rho_\t,y\rrbracket$, we say that $y$ is a descendant of $x$. We note the following simple fact. If $x,y\in \t$,
there is a unique point $x\wedge y$ such that $\llbracket \rho_\t,x\rrbracket\cap\llbracket \rho_\t,y\rrbracket=\llbracket \rho_\t,x\wedge y\rrbracket$,
and $d_\t(x,y)=d_\t(x\wedge y,x)+d_\t(x\wedge y,y)$. 

In what follows, we will say tree instead of rooted compact $\R$-tree. We write $\T$ for the space of all trees modulo root-preserving isometries, which is
equipped with the (pointed) Gromov-Hausdorff distance $d_{GH}$. 

Let $\t\in\T$. A point $x\in\t$ is a branching point if $\t\backslash\{x\}$ has at least three connected components. By convention, we also say that
the root $\rho_\t$ is a branching point if $\t\backslash\{\rho_\t\}$ has at least two connected components. We say that $x\in\t$
is a leaf if $\t \backslash\{x\}$ is connected (with this definition, $\rho_\t$ may be a leaf). We write $\b_\t$ for the set of all branching points of $\t$ and
$\ll_\t$ for the set of all leaves of $\t$. We define the skeleton $\t^\circ$ of $\t$ by $\t^\circ=\t\backslash \ll_\t$. 

It will be useful to consider simple trees. 
A tree $\t$ is a simple tree if it is the union of a finite number of
segments. Both $\b_\t$ and $\ll_\t$ are then finite sets. For a simple tree $\t$, a segment $S$ of $\t$ is called 
elementary  if it is maximal (for the inclusion relation) among all segments whose interior contains no branching point. Note that the interiors of elementary segments 
are disjoint and cover $\t\backslash(\b_\t\cup\ll_\t)$. 

Any tree is the limit of a sequence of simple trees. In fact, let $\t$ be a tree and let $(x_n)_{n\geq 1}$ be a dense sequence in $\t$, and, 
for every $n\geq 1$, set
\begin{equation}
\label{simple-approx}
\t_{(n)}=\llbracket \rho_\t,x_1\rrbracket \cup \llbracket \rho_\t,x_2\rrbracket \cup \cdots \cup \llbracket \rho_\t,x_n\rrbracket.
\end{equation}
Then, $\t_{(n)}$ equipped with the distance induced by $d_\t$ is a simple tree, and a compactness argument shows that the maximal distance from a point of $\t$
to $\t_{(n)}$ tend to $0$ as $n\to\infty$, and thus $\t_{(n)}$ converges to $\t$ as $n\to\infty$
in the Hausdorff sense (and a fortiori in the Gromov-Hausdorff sense).   

Let $\t\in \T$. Then, we can define a $\sigma$-finite measure $\lambda_\t$ on $\t$, called the length measure on $\t$, which is supported on the skeleton $\t^\circ$ and such that, for any segment $\llbracket x,y\rrbracket$ of $\t$,
$\lambda_\t(\llbracket x,y\rrbracket)=d_\t(x,y)$ (see e.g. Section 4.3.5 in \cite{Evans}). In the case of a simple tree, the measure $\lambda_\t$
is constructed as the sum of Lebesgue measures on the elementary segments of $\t$. Then, for a general tree $\t$ in $\T$, $\lambda_\t$ is
the increasing limit of the measures $\lambda_{\t_{(n)}}$, where the sequence $\t_{(n)}$ is constructed as in \eqref{simple-approx}. Alternatively, we
may define $\lambda_\t$ as the one-dimensional Hausdorff measure on the skeleton $\t^\circ$. 

By definition, the length of the tree $\t$ is $L(\t):=\lambda_\t(\t)\leq \infty$. For the sequence $(\t_{(n)})_{n\geq 1}$ defined in \eqref{simple-approx}, 
we have $L(\t)=\lim\uparrow L(\t_{(n)})$ as $n\to\infty$. 

\begin{figure}[!h]
 \begin{center}
 \includegraphics[width=14cm]{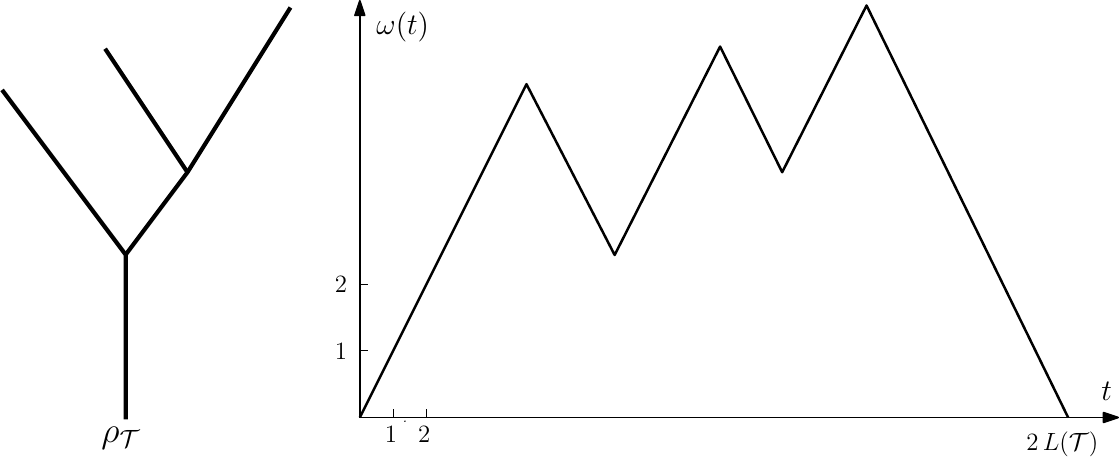}
 \caption{\label{t-coding}
 Left: A simple tree $\t$ with 5 elementary segments, 2 branching points and 4 leaves. Right: A coding function of $\t$
 with slopes $\pm 1$.}
 \end{center}
 \vspace{-5mm}
 \end{figure}

\subsection{Coding trees with functions}

Let $\sigma\geq 0$ and let $\omega:[0,\sigma]\la \R_+$ be a continuous function such that $\omega(0)=\omega(\sigma)=0$. We define an
equivalence relation $\sim_\omega$ on $[0,\sigma]$ by setting:
$$s\sim_\omega s'\ \hbox{if and only if }\ \omega(s)=\omega(s')= \min_{s\wedge s'\leq r\leq s\vee s'} \omega(r).$$
We let $\t_\omega$ be the quotient space $[0,\sigma]/\!\sim_{\omega}$ and equip $\t_\omega$ with the distance 
induced by
$$d_{(\omega)}(s,s')= \omega(s)+\omega(s')-2 \min_{s\wedge s'\leq r\leq s\vee s'} \omega(r).$$
(notice that $d_{(\omega)}(s,s')=0$ if and only if $s\sim_\omega s'$). By abuse of notation, we keep the same 
notation $d_{(\omega)}$ for the induced distance on $\t_\omega$, and we also let $p_{(\omega)}$ be the
canonical projection from $[0,\sigma]$ onto $\t_\omega$. 

Then it is not hard to verify \cite[Theorem 2.1]{DLG} that 
$(\t_\omega,d_{(\omega)})$ is a tree (called the tree coded by $\omega$) whose root is $p_{(\omega)}(0)=p_{(\omega)}(\sigma)$ by definition. Conversely, any tree can be represented in the form $\t_\omega$. We refer to \cite{Duq} for more information about the coding of trees by functions. 

In what follows we mainly consider $\sigma=1$, 
and we write $\cc$ for the space of all continuous functions $\omega:[0,1]\la \R_+$ such that 
$\omega(0)=\omega(1)=0$. The space $\cc$ is equipped with the distance
$$d_\cc(\omega,\omega')=\|\omega-\omega'\|:= \sup_{t\in[0,1]}|\omega(t)-\omega'(t)|.$$
The mapping $\omega\mapsto \t_\omega$ is continuous, and even Lipschitz, from $\cc$ onto $\T$ (see Lemma 2.3 in \cite{DLG}). 
For any $\omega\in\cc$, we will write $\mu_\omega$ for the probability measure on $\t_\omega$ which is the
pushforward of $\mathbf{1}_{[0,1]}(t)\,\dd t$ under $p_{(\omega)}$. As we will discuss later, 
the mapping $\omega\mapsto (\t_\omega,\mu_\omega)$ is also continuous from $\cc$
into the appropriate space of weighted trees. 

Let $\be=(\be_t)_{0\leq t\leq 1}$ be a normalized Brownian excursion.% and set $\be_t=2\,\be^\circ_t$ for every $t\in[0,1]$. 
We view $\be=(\be_t)_{0\leq t\leq 1}$ as a random variable with values in $\cc$. The random tree 
$(\t_{\be},d_{(\be)})$ is the CRT, see \cite{Al2} or \cite{probasur}  (note that our normalization differs from the one in \cite{Al,Al2}, where the CRT is rather the tree $\t_{2\be}$).
\section{Large deviations in the Gromov-Hausdorff topology}
\label{sec:LDP1}

In this section, we prove Theorem \ref{LDP-GH}. 
Let $H$ denote the subset of $\cc$ defined by saying that $\omega\in H$
if $\omega\in \cc$ and there exists a square integrable function $\dot\omega:[0,1]\la\R$
such that, for every $t\in[0,1]$,
$$\omega(t)=\int_0^t \dot\omega(s)\,\dd s.$$

Recall that $\be$ is a normalized Brownian excursion, which we view as a random variable with values in $\cc$. The following is a variant of the classical Schilder theorem.

\begin{theorem}
\label{Schilder-excu}
The laws 
of $\ve\,\be$, $\ve>0$, satisfy a large deviation principle with speed $\ve^{-2}$ and good rate function
$$I_1(\omega)=\left\{
\begin{array}{ll}
\displaystyle{\frac{1}{2}}\int_0^1 \dot\omega(t)^2\,\dd t\quad&\hbox{if }\omega\in H,\\
\noalign{\smallskip}
\infty&\hbox{otherwise.}
\end{array}
\right.
$$
\end{theorem}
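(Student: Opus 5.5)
I plan to deduce Theorem \ref{Schilder-excu} from the classical Schilder theorem for the Brownian bridge, using an explicit representation of the normalized excursion. The most convenient one is the Vervaat transform. If $b=(b_t)_{0\leq t\leq 1}$ is a standard Brownian bridge and $m$ is the (a.s. unique) time at which $b$ attains its minimum, then
$$\be_t=b_{m+t \,\mathrm{mod}\, 1}-b_m$$
is a normalized excursion. The rate function for $\ve b$ is well known, with speed $\ve^{-2}$: it is $\frac12\int_0^1\dot\omega^2$ on absolutely continuous $\omega$ with $\omega(0)=\omega(1)=0$ and $\dot\omega\in L^2$, and $\infty$ otherwise. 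This follows from Schilder for Brownian motion $B$ via the continuous map $B\mapsto (B_t-tB_1)$ and the contraction principle. The infimum of $\frac12\int\dot f^2$ over $f$ with $f_t-tf_1=\omega_t$ is attained at $f_1=0$.

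The difficulty is that the Vervaat map $V$ is discontinuous at paths with non-unique minimum, so the contraction principle does not apply directly. I would handle this with two matching bounds.

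\textbf{Upper bound.} Let $F\subset\cc$ be closed. If $\ve b$ lies in a small neighbourhood of $\omega$ in the sup norm and $\omega$ has a unique minimum, then $V(\ve b)$ is close to $V(\omega)$. More generally, every limit point of $V(\omega_n)$ with $\omega_n\to\omega$ has the form $V_s(\omega)$: the rotation of $\omega$ at some minimizing time $s$, shifted by $-\min\omega$. Cyclic rotation at a minimum preserves $\frac12\int\dot\omega^2$. So the set-valued extension is upper semicontinuous, and the rate of $\{V(\ve b)\in F\}$ is at least $\inf\{I_{br}(\omega): V_s(\omega)\in F \text{ for some minimum } s\}=\inf_F I_1$. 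I would make this rigorous by the standard covering argument using goodness of the bridge rate function. Concretely, compact level sets are covered by finitely many balls $B(\omega,\delta)$, and on each such ball, for $\delta$ small, all possible $V$-images avoid a neighbourhood of $F$ whenever every $V_s(\omega)\notin F$.

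\textbf{Lower bound.} Let $O$ be open and $\omega\in O\cap H$ with $I_1(\omega)<\infty$. I would perturb $\omega$ inside $O$ to $\tilde\omega$, strictly positive on $(0,1)$, with $I_1(\tilde\omega)\le I_1(\omega)+\eta$. For example, take $(1-\delta)\omega+\delta\cdot 4t(1-t)$, which is $H$-close. Then, viewed as a bridge path, $\tilde\omega$ has its unique minimum at $0\equiv1$. Here I would rather rotate to make the minimum interior: take the bridge path $g_t=\tilde\omega(t+1/2 \,\mathrm{mod}\, 1)-\tilde\omega(1/2)$, whose unique minimum is at $1/2$, with equal energy. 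Then $V$ is continuous at $g$ and $V(g)=\tilde\omega$, so the lower bound for the bridge at an open neighbourhood of $g$ mapped into $O$ gives the result.

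Finally, goodness of $I_1$ follows since it is the restriction of the bridge rate function to the closed set $\cc$. I expect the main obstacle to be the upper bound near paths with several minima, that is, the careful handling of the discontinuity of $V$.
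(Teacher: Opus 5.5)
Your proposal is correct and follows the same skeleton as the appendix of the paper. Both arguments use:
\begin{itemize}
\item Schilder's theorem for the bridge, obtained through $B_t-tB_1$;
\item the Vervaat map;
\item the fact that limit points of Vervaat images of $\omega_n\to\omega$ are rotations of $\omega$ at minimizing times, which have the same energy.
\end{itemize}
The lower bound also matches. It rests on continuity of the Vervaat map at paths with a unique minimum, plus approximation of $\omega\in G$ by strictly positive paths of almost the same energy. The paper only asserts this approximation, and your explicit $(1-\delta)\omega+\delta\,4t(1-t)$ supplies it. Your extra rotation to an interior minimum is harmless but not needed: the minimum of $\tilde\omega$ at $0\equiv 1$ is already unique on the circle, and the paper notes that $\Gamma$ is continuous there.

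The genuine difference is in the upper bound. The paper treats only compact $K$. It bounds $\P(\varepsilon\be\in K)\le\P(\varepsilon\bb\in\overline{\Gamma^{-1}(K)})$ and shows that $I_0$ has the same infimum on $\Gamma^{-1}(K)$ as on its closure. It then appeals to exponential tightness of $\varepsilon\be$, whose proof it omits.

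Your approach covers the compact level set $\{I_0\le\alpha\}$ by finitely many balls on which every Vervaat image stays away from $F$. This gives the bound for all closed $F$ at once, so exponential tightness is never needed. The cost is the uniform, ball-wise form of the upper semicontinuity, which follows from uniform continuity of $\omega$ on the circle and compactness of its set of minimizing times. Goodness of $I_1$ then follows, as you say, from $\{I_1\le\alpha\}=\{I_0\le\alpha\}\cap\cc$.

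Incidentally, the paper's closure argument also works for closed $F$ once the subsequence is extracted through $t^n_{min}\to t_\infty$ rather than through compactness of $K$. The two routes are therefore closer than they first appear, and both could dispense with exponential tightness.
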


Although Theorem \ref{Schilder-excu} is certainly known to specialists, we were unable to find a precise reference,
and for this reason we sketch a short proof in the appendix below. 

%Let $\T$ be the space of all pointed compact $\R$-trees, equipped with the Gromov-Hausdorff topology.
%With every $\omega\in\cc$ we can associate a (rooted) tree $\t_\omega\in\T$, which is the tree coded by $\omega$.
%Write $\Phi$ for the mapping $\omega\la\t_\omega$. Then $\Phi$ is continuous, and even Lipschitz
%(see Lemma 2.3 in \cite{DLG}.
%
%We also need to introduce the length measure on a tree $\t\in\T$. To this end, let 
%$(x_n)_{n\geq 1}$ be a dense sequence in $\t$, and
%for every $n\geq1$, let
%$$\t_{(n)}=\llbracket \rho_\t,x_1\rrbracket \cup \llbracket \rho_\t,x_2\rrbracket \cup \cdots \cup \llbracket \rho_\t,x_n\rrbracket ,$$
%where $\rho_\t$ is the root of $\t$ and $\llbracket \rho_\t,x\rrbracket$ stands for the 
%geodesic segment from $\rho_\t$ to $x$. Since $\t_{(n)}$ is a finite union of segments, we can define a length measure on $\t_{(n)}$,
%which we denote by $\lambda_{(n)}$. The length measure on $\t$ is
%$$\lambda_\t:=\lim_{n\to\infty} \uparrow \lambda_{(n)}.$$
%The measure $\lambda_\t$ is characterized by the fact that, for any segment $I$ of $\t$,
%$\lambda_\t(I)$
%is the length of $I$ (this shows that $\lambda_\t$ does not depend on the choice of the sequence $(x_n)_{n\geq 1}$.
%By definition, the (finite or infinite) length of $\t$ is $L(\t)=\lambda_\t(\t)$. 

Since the mapping $\omega\mapsto \t_\omega$ is continuous, Theorem \ref{Schilder-excu} and an application of the contraction principle
(Theorem 4.2.1 in \cite{DZ})
immediately show that the laws of $\t_{\ve\be}$ satisfy a large deviation principle with speed $\ve^{-2}$ and good
rate function
$$I(\t)=\inf\{I_1(\omega):\omega \in H,\,\t_\omega=\t\},$$
where $\inf\varnothing =\infty$. So the proof of Theorem \ref{LDP-GH} boils down to verifying that the infimum in the last display is equal to $\frac{1}{2}\,L(\t)^2$.
This is done in the following proposition.

\begin{proposition}
\label{rate-funct}
Let $\t\in\T$. Then the following are equivalent.
\begin{itemize}
\item[\rm(i)] $L(\t)<\infty$;
\item[\rm(ii)] there exists $\omega\in H$ such that $\t_\omega=\t$.
\end{itemize}
If these properties hold, we have
$$\inf\Big\{\int_0^1 \dot\omega(t)^2\,\dd t : \omega\in H,\,\t_\omega=\t\Big\}=4\,L(\t)^2.$$
\end{proposition}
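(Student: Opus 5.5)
We will prove the two implications and the formula together, through a lower bound valid for every admissible $\omega$ and a matching construction.

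\emph{Lower bound.} Let $\omega\in H$ with $\t_\omega=\t$. The first step is the inequality
$$\int_0^1|\dot\omega(t)|\,\dd t\;\geq\;2\,L(\t).$$
It suffices to prove $\int_0^1|\dot\omega|\,\dd t\ge 2L(\t_{(n)})$ for the simple subtrees $\t_{(n)}$ of \eqref{simple-approx} and then let $n\to\infty$. Fix a simple subtree $\t'$ spanned by points $p_{(\omega)}(s_1),\dots,p_{(\omega)}(s_n)$ with $0<s_1<\dots<s_n<1$. Consider the contour path: the concatenation of the geodesics from $\rho$ to $x_{1}=p_{(\omega)}(s_1)$, then from $x_1$ to $x_2$, and so on, ending with the geodesic from $x_n$ back to $\rho$. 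This path covers every elementary segment of $\t'$ at least twice, once going up and once going down, since it is a closed walk in a tree. Hence
$$d(0,s_1)+d(s_1,s_2)+\cdots+d(s_n,1)\ge 2L(\t').$$
On the other hand, $d_{(\omega)}(s,s')\le$ the total variation of $\omega$ on $[s,s']$, because $\omega(s)-\min\omega+\omega(s')-\min\omega$ is at most that variation. Summing gives $\int_0^1|\dot\omega|\ge 2L(\t')$. Cauchy--Schwarz on $[0,1]$ then yields $\int_0^1\dot\omega^2\ge(\int|\dot\omega|)^2\ge 4L(\t)^2$. In particular (ii) implies (i).

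\emph{Upper bound and (i)$\Rightarrow$(ii).} Assume $L=L(\t)<\infty$; if $L=0$ take $\omega=0$. First take $\t$ simple. Its depth-first contour exploration at unit speed has total duration $2L$ and gives a function $g:[0,2L]\to\R_+$ with slopes $\pm1$ and $\t_g=\t$. Rescaling time, set $\omega(t)=g(2Lt)$. Then $\omega\in H$, $\dot\omega=\pm 2L$, $\int_0^1\dot\omega^2=4L^2$, and time-rescaling does not change the coded tree.

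For general $\t$ with $L<\infty$, I would define the contour directly. Define $\Phi(s)=\lambda_\t(\{y:\ y \text{ visited before } s\})$ appropriately, or more concretely take limits. The contour functions $g_n$ of $\t_{(n)}$, parametrized on $[0,1]$ with speed $2L(\t_{(n)})\le 2L$, are uniformly Lipschitz. By Arzel\`a--Ascoli, a subsequence converges uniformly to some $\omega$ with $|\dot\omega|\le 2L$. Continuity of $\omega\mapsto\t_\omega$ gives $\t_\omega=\lim\t_{(n)}=\t$ in $\T$, so equality holds modulo isometry, which is what $\t_\omega=\t$ means in $\T$. Thus $\int_0^1\dot\omega^2\le 4L^2$, and combined with the lower bound the infimum equals $4L(\t)^2$ (it is in fact attained).

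The main obstacle I expect is making the lower bound rigorous: the inequality $\sum d(s_i,s_{i+1})\ge 2L(\t')$ requires that the closed walk through the $x_i$ in contour order traverses each edge of $\t'$ at least twice. This holds for any closed walk in a tree that visits all leaves of $\t'$, since each edge separates the root from some $x_i$. The general approximation step is comparatively routine.
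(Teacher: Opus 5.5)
Your proof is correct. For (i)$\Rightarrow$(ii) and the upper bound you follow the paper exactly: slope-$\pm1$ contour functions of the $\t_{(n)}$ rescaled to $[0,1]$, the uniform Lipschitz constant $2L(\t)$, Arzel\`a--Ascoli, and continuity of $\omega\mapsto\t_\omega$ together with uniqueness of Gromov--Hausdorff limits.

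The lower bound, however, is obtained by a genuinely different route. The paper argues segment by segment. For each elementary segment $\llbracket x,y\rrbracket$ of $\t_{(n)}$ it exhibits two time intervals: $(u,v)$, on which $\omega$ climbs from the height of $x$ to that of $y$, and $(v',u')$, on which it comes back down. It bounds $d_\t(x,y)$ by $\int|\dot\omega|$ over each, and then relies on the pairwise disjointness of all these intervals as the segment varies.

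You instead sort the spanning points by their times $s_1<\cdots<s_n$. You use the elementary bound $d_{(\omega)}(s,s')\le\int_s^{s'}|\dot\omega(t)|\,\dd t$ on the consecutive, automatically disjoint, intervals $[s_i,s_{i+1}]$. You then finish with a purely metric fact. Write $\sum_i d_\t(x_i,x_{i+1})=\int N(z)\,\lambda_\t(\dd z)$, where $N(z)$ counts the geodesic pieces $\llbracket x_i,x_{i+1}\rrbracket$ containing $z$. Then $N(z)\geq 2$ off a finite subset of $\bigcup_j\llbracket\rho,x_j\rrbracket$. Indeed, the pieces before reaching $x_j$ form a connected set containing $\rho$ and $x_j$, and so do the pieces after, so each covers $\llbracket\rho,x_j\rrbracket$. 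Stating it in this form makes your ``covers every edge twice'' step fully rigorous. Note also that the time ordering is needed only for the disjointness of the intervals, not for the double covering.

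Your argument is shorter and avoids all the bookkeeping about crossing intervals and their disjointness. What the paper's localized argument buys is that it assigns a definite part of the energy of $\omega$ to each elementary segment. This is exactly what is reused in the first step of the proof of Theorem~\ref{Schilder-measure} (the intervals $(t_S,u_S)$ and $(v_S,w_S)$). There the rate function depends on how $\mu$ is distributed along each segment, and a single global inequality $\int_0^1|\dot\omega|\geq 2L(\t)$ would not be enough.
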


\proof (i)$\Rightarrow$(ii) Consider the trees $\t_{(n)}$ defined by \eqref{simple-approx} from a dense sequence $(x_n)_{n\geq 1}$ in $\t$. For every $n$, we can find a coding function 
$\varphi_n:[0,2L(\t_{(n)})]\la \R_+$ of $\t_{(n)}$, which is piecewise linear with slope $+1$ or $-1$ (see Fig.~\ref{t-coding}). We set $\omega_n(t)=
\varphi_n(2L(\t_{(n)})t)$, for $t\in[0,1]$. Then $\omega_n\in\cc$ and $\omega_n$ is still a coding function of $\t_{(n)}$. Furthermore,
$$|\omega_n(t)-\omega_n(s)|\leq 2L(\t_{(n)})\,|t-s|$$
for every $s,t\in[0,1]$. Since $L(\t_{(n)})\leq L(\t)<\infty$, the sequence $(\omega_n)$ is uniformly equicontinuous, and by taking a 
subsequence we may assume that $\omega_n\la \omega_\infty$ in $\cc$. Since the mapping $\omega\mapsto\t_\omega$ is continuous, we get that $\t_{(n)}=\t_{\omega_n}$
converges to $\t_{\omega_\infty}$ (along the chosen subsequence). But we also know that $\t_{(n)}$ converges to 
$\t$ in the Gromov-Hausdorff sense (in fact for the Hausdorff distance), and thus we get that $\t=\t_{\omega_\infty}$. 
Moreover we have
$$|\omega_\infty(t)-\omega_\infty(s)|\leq 2L(\t)\,|t-s|$$
for every $s,t\in[0,1]$. This implies that $\omega_\infty\in H$, and $|\dot\omega_\infty|\leq 2L(\t)$,
so that
\begin{equation}
\label{tec01}
\int_0^1 \dot\omega_\infty(t)^2\,\dd t \leq 4\,L(\t)^2.
\end{equation}
(ii)$\Rightarrow$(i) Let $\omega\in H$ such that $\t_\omega=\t$. Consider again the sequence $(\t_{(n)})_{n\geq 1}$. For fixed $n$, 
let $\llbracket x,y\rrbracket$ be an elementary segment of $\t_{(n)}$. Without loss of generality, we can assume that $y$ is a 
descendant of $x$ in $\t_{(n)}$, and thus also in $\t$. Set
\begin{align*}
&v=\inf\{t\in[0,1]: p_{(\omega)}(t)=y\}\\
&u=\sup\{t\in[0,v]:p_{(\omega)}(t)=x\}
\end{align*}
Note that $p_{(\omega)}(t)\in \rrbracket x,y\llbracket$ for every $t\in (u,v)$. 
Then,
$$\lambda_{\t_n}(\llbracket x,y\rrbracket)=\lambda_\t(\llbracket x,y\rrbracket)=d_\t(x,y)=d_\t(\rho_\t,y)-d_\t(\rho_\t,x)=\omega(v)-\omega(u),$$
and thus 
\begin{equation}
\label{tec02}
\lambda_{(\t_n)}(\llbracket x,y\rrbracket)=\int_u^v \dot\omega(t)\,\dd t\leq \int_u^v |\dot\omega(t)|\,\dd t.
\end{equation}
Similarly, we can set
\begin{align*}
&v'=\sup\{t\in[0,1]: p_{(\omega)}(t)=y\}\\
&u'=\inf\{t\in[v',1]:p_{(\omega)}(t)=x\}
\end{align*}
and the same argument gives
\begin{equation}
\label{tec03}
\lambda_{(\t_n)}(\llbracket x,y\rrbracket)\leq \int_{v'}^{u'}|\dot\omega(t)|\,\dd t.
\end{equation}
The intervals $(u,v)$ and $(v',u')$ are disjoint. Moreover, when $\llbracket x,y\rrbracket$ varies among
all elementary segments of $\t_{(n)}$, the corresponding intervals $(u,v)$ and $(v',u')$ are also disjoint. By summing 
the bounds \eqref{tec02} and \eqref{tec03} over 
all choices of $\llbracket x,y\rrbracket$, we get 
$$2\lambda_{\t_n}(\t_n)\leq \int_0^1|\dot\omega(t)|\,\dd t,$$
and thus 
$$L(\t_{(n)})\leq \frac{1}{2}\int_0^1|\dot\omega(t)|\,\dd t.$$
Then, by letting $n\to\infty$, we get
$$L(\t)\leq \frac{1}{2}\int_0^1|\dot\omega(t)|\,\dd t<\infty.$$
Moreover, we have
$$\int_0^1 (\dot\omega(t))^2\,\dd t\geq \Big(\int_0^1|\dot\omega(t)|\,\dd t\Big)^2\geq 4L(\t)^2.$$
Together with \eqref{tec01}, this gives the last statement of the proposition. 
This completes the proof \endproof

\rem It is interesting to compare Theorem \ref{LDP-GH} with the known formula for the finite-dimensional
marginals of the tree $\t_{\be}$. Let $n\geq 1$, and consider the subtree $\t_{(n)}$ of the CRT $\t_{\be}$ defined by
formula \eqref{simple-approx} when $x_1,x_2,\ldots,x_n$ are chosen independently according to the mass
measure $\mu_{\be}$. The simple tree $\t_{(n)}$ can be described by a discrete branching structure, which is a binary
tree with $n$ labelled  leaves whose vertices are in one-to-one correspondence with the
elementary segments of $\t_{(n)}$, and a collection of real marks assigned to the vertices of this binary tree
(the mark assigned to a vertex is the length of the corresponding elementary segment). Then,
the distribution 
of $\t_{(n)}$ has density proportional to $L(\t)\,\exp(-2\,L(\t)^2)$ with respect to the uniform measure
on simple trees with binary branching and $n$ leaves (not counting the root as a leaf here): this uniform measure is obtained by summing over
discrete binary structures, and then choosing the marks independently according to Lebesgue measure on $\R_+$ (see the end of Chapter 3 in \cite{Zurich}). 
In a very informal way, one would like to interpret the law of the CRT as having density $L(\t)\,\exp(-2\,L(\t)^2)$ with respect
to a uniform measure on rooted compact $\R$-trees with binary branching, but of course the latter uniform measure does not make sense.

\section{Analytic lemmas}
\label{sec:anal}

In this section, we state and prove two lemmas that will be useful in the proof of Theorem \ref{Schilder-measure}.
The proofs are simple exercises of measure theory, but we provide details for the sake of completeness. 

\begin{lemma}
\label{analytic-lem1} 
Let $a>0$ and let $p:[0,a]\la \R_+$ be a nonnegative Borel function such that $\int_0^a p(t)\,\dd t<\infty$. 
For every $t\in[0,a]$, set
$$f(t)=\int_0^t p(r)\,\dd r.$$
Set $h=f(a)$ and assume that $h>0$.  Let $\mu$ be the pushforward of the measure $\mathbf{1}_{[0,a]}(t)\,\dd t$ under $f$,
and consider the Lebesgue decomposition 
of the measure $\mu$,
$$\mu(\dd s)=\mathbf{1}_{[0,h]}(s)\,q(s)\,\dd s + \theta(\dd s)$$
where $q:[0,h]\la \R_+$ is a nonnegative Borel function, and $\theta$ is singular with respect
to Lebesgue measure on $[0,h]$. Then,
$$\int_0^a p(t)^2\,\dd t=\int_0^h \frac{\dd s}{q(s)}.$$
\end{lemma}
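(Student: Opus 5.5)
The plan is to decompose $[0,a]$ according to whether $p$ vanishes, and to identify the absolutely continuous part of $\mu$ through the change of variables $s=f(t)$. Set $Z=\{t\in[0,a]:p(t)=0\}$ and $P=[0,a]\backslash Z$. Then $\mu=\mu_Z+\mu_P$, where $\mu_Z$ and $\mu_P$ are the pushforwards under $f$ of Lebesgue measure restricted to $Z$ and to $P$ respectively. The function $f$ is absolutely continuous and nondecreasing, with $f'=p$ a.e.

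First, $\mu_Z$ is singular. By the area formula for monotone absolutely continuous functions,
$$\mathrm{Leb}(f(Z))\leq \int_Z p(t)\,\dd t=0,$$
and $\mu_Z$ is carried by $f(Z)$. Since $f(Z)$ is Lebesgue null, $\mu_Z$ is singular.

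Second, $\mu_P$ is absolutely continuous with density $1/p(f^{-1}(s))$. Let $g:[0,h]\la[0,a]$ be the right-continuous inverse, $g(s)=\inf\{t:f(t)>s\}$. For a Borel function $\varphi\geq 0$, write
$$\int \varphi\,\dd\mu_P=\int_P \varphi(f(t))\,\dd t=\int_P \varphi(f(t))\,\frac{1}{p(t)}\,p(t)\,\dd t.$$
The change of variables formula $\int_0^a \psi(t)\,p(t)\,\dd t=\int_0^h \psi(g(s))\,\dd s$ holds for $\psi\geq 0$ Borel. This is the statement that Lebesgue measure on $[0,h]$ is the pushforward of $p(t)\,\dd t$ under $f$, with $g$ inverting $f$ off the countably many flat pieces. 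Applying it with $\psi=\mathbf 1_P\,\varphi\circ f/p$ gives
$$\int\varphi\,\dd\mu_P=\int_0^h \varphi(s)\,\frac{\mathbf 1_P(g(s))}{p(g(s))}\,\dd s,$$
using $f(g(s))=s$. Hence $\mu_P(\dd s)=\tilde q(s)\,\dd s$ with $\tilde q(s)=\mathbf 1_P(g(s))/p(g(s))$.

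By uniqueness of the Lebesgue decomposition, $q=\tilde q$ a.e. We need $g(s)\in P$ for a.e. $s$. Apply the same formula with $\psi=\mathbf 1_Z$: this gives $\mathrm{Leb}\{s:g(s)\in Z\}=\int_Z p=0$. So $q(s)=1/p(g(s))>0$ a.e. Then the change of variables with $\psi=p\,\mathbf 1_P$ yields
$$\int_0^h \frac{\dd s}{q(s)}=\int_0^h p(g(s))\,\dd s=\int_0^a p(t)^2\,\dd t,$$
with both sides possibly infinite.

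The main obstacle is justifying the change-of-variables identity $\int_0^a\psi(t)\,p(t)\,\dd t=\int_0^h\psi(g(s))\,\dd s$ rigorously for $\psi\geq 0$ Borel. I would first check it for $\psi=\mathbf 1_{[0,c]}$. On the left this gives $f(c)$. On the right it gives $\mathrm{Leb}\{s:g(s)\leq c\}=f(c)$, since $g(s)\leq c$ iff $s<f(c)$ up to endpoints. A monotone class argument then extends it to all $\psi\geq 0$. The remaining care is the null set where $f$ fails to be injective; this is handled by the $Z$ computation above.
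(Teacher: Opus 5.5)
Your proof is correct, but you identify $q$ by a different route than the paper. Both arguments rest on the same change-of-variables identity $\int_0^a\psi(t)\,p(t)\,\dd t=\int_0^h\psi(g(s))\,\dd s$, which is the paper's \eqref{key1}, proved the same way (intervals, then a monotone class argument).

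The paper then works with distribution functions. It notes that $\mu([0,s])=g(s)$ and invokes the differentiation theorems for monotone functions to say that the absolutely continuous part of $\mu$ has density $g'$. It computes $g'(s)=1/p(g(s))$ a.e.\ by differentiating $f(g(s))=s$ at continuity points of $g$ where $g(s)$ lies in the set $D$ on which $f'=p$. It then uses \eqref{key1} with $\varphi=\mathbf 1_{D^c}$ to see that this covers a.e.\ $s$.

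You avoid differentiation entirely. You split $\mu=f_*(\mathbf 1_{\{p=0\}}\,\dd t)+f_*(\mathbf 1_{\{p>0\}}\,\dd t)$, show the first piece is singular and the second has the explicit density $\mathbf 1_P(g(s))/p(g(s))$, and conclude by uniqueness of the Lebesgue decomposition. This route is purely measure-theoretic, and it identifies the singular part $\theta$ explicitly as the image of Lebesgue measure on $\{p=0\}$. The paper's route gives instead the pointwise inverse-function rule $g'=1/(p\circ g)$ a.e.

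You can also drop your appeal to the area formula for $\mathrm{Leb}(f(Z))=0$. Suppose $t\in Z$ and $f(t)$ is neither $h$ nor one of the countably many values taken on flat pieces of $f$. Then $f^{-1}(\{f(t)\})=\{t\}$, so $g(f(t))=t$. Hence $f(Z)$ is contained in $g^{-1}(Z)$ up to a countable set. The identity you already use gives $\mathrm{Leb}(g^{-1}(Z))=\int_Z p=0$, which makes the argument self-contained.
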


\proof
We 
set
$$g(s)=\inf\{t\in[0,a]:f(t)>s\},$$
for $s\in [0,h)$,
and $g(h)=a$. Then we have $\mu([0,s])=g(s)$, for every $s\in [0,h]$.
We first observe that for every nonnegative Borel function $\varphi:[0,a]\la \R_+$, we have
\begin{equation}
\label{key1}
\int_0^h \varphi\circ g(s)\,\dd s= \int_0^a \varphi(t)\,p(t)\,\dd t.
\end{equation}
This is trivial if $\varphi=1$, and in the case where $\varphi=\mathbf{1}_{[0,r)}$, with $r\in(0,a)$, this is a consequence 
of the fact that, for every $s\in[0,h]$, we have $g(s)<r$ if and only if $f(r)>s$. The general case then follows by 
standard arguments. 

By a standard differentiation theorem (cf.~\cite[Theorem 8.17]{Ru}), we know that $f$ is Lebesgue a.e. differentiable on $[0,a]$, and
$f'(t)=p(t)$, for a.e. $t$. Similarly, we know from \cite[Theorem 8.18]{Ru} that $g$ is also differentiable a.e. on $[0,h]$. We claim that
\begin{equation}
\label{diff-inverse}
g'(s)=\frac{1}{p\circ g(s)},\quad \hbox{a.e.}
\end{equation}
Let us provide a brief justification of \eqref{diff-inverse}. Let $D$ be the (full measure) set of all $t\in[0,a]$ such that
$f'(t)$ exists and is equal to $p(t)$. If $t_0\in D$, we have
$$\lim_{t\to t_0} \frac{f(t)-f(t_0)}{t-t_0} = p(t_0).$$
Note that $g$ is monotone increasing. If $s_0\in[0,h]$ is such that $g(s_0)\in D$, if follows from the previous display that
$$\lim_{s\to s_0}\frac{f(g(s))-f(g(s_0))}{g(s)-g(s_0)}=p(g(s_0)),$$
unless possibly if $g$ is discontinuous at $s_0$, which excludes an at most countable collection of values of $s_0$. 
By definition, $f(g(s))=s$, $f(g(s_0))=s_0$, and, excluding the case where $s_0$ is a jump time of $g$, we get
$$g'(s_0)=\lim_{s\to s_0} \frac{g(s)-g(s_0)}{s-s_0}= \frac{1}{p\circ g(s_0)}.$$
To complete the proof of \eqref{diff-inverse}, we just have to verify that $\{s_0\in [0,h]: g(s_0)\in D\}$ has full Lebesgue 
measure. This follows by taking $\varphi=\mathbf{1}_{D^c}$ in \eqref{key1}.

From the equality $\mu([0,s])=g(s)$, the absolutely continuous part of $\mu$ is $g'(s)\,\dd s$ (cf.~\cite[Theorem 8.18]{Ru}), and thus
$$q(s)=g'(s)=\frac{1}{p\circ g(s)}\,,\quad \hbox{a.e.}$$
Finally, applying \eqref{key1} with $\varphi(t)=p(t)$, we get
$$\int_0^a p(t)^2\,\dd t=\int_0^h p\circ g(s)\,\dd s=\int_0^h \frac{\dd s}{q(s)},$$
which completes the proof \endproof

\begin{lemma}
\label{analytic-lem2}
Let $h>0$ and let $\mu$ be  a finite measure on $[0,h]$, whose Lebesgue decomposition is 
$$\mu(\dd s)=\mathbf{1}_{[0,h]}(s)\,q(s)\,\dd s + \theta(\dd s)$$
where we assume that the (nonnegative Borel) function $q$ is such that $q(s)>0$ a.e. Also set $a=\mu([0,h])>0$. Then, there exists a 
nonnegative Borel function $p:[0,a]\la \R_+$ such that $\int_0^a p(r)\,\dd r=h$ and, for every $s\in[0,h)$,
$$\mu([0,s])=\inf\{t\in[0,a]:\int_0^t p(r)\,\dd r>s\}.$$
\end{lemma}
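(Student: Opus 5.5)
The idea is to invert the construction of Lemma \ref{analytic-lem1}. Set $G(s)=\mu([0,s])$ for $s\in[0,h]$. This is a nondecreasing, right-continuous function with values in $[0,a]$ and $G(h)=a$. We want $p$ such that $f(t)=\int_0^t p(r)\,\dd r$ is the generalized inverse of $G$. So we take $f(t)=\inf\{s\in[0,h]: G(s)\geq t\}$ for $t\in[0,a]$, with $f(0)=0$ understood as appropriate (note $G(0)=\mu(\{0\})$ may be positive, in which case $f$ vanishes on $[0,G(0)]$). This $f$ is nondecreasing from $0$ to $h$. The key steps are then the following.

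\textbf{Step 1: $f$ is continuous.} If $f$ jumped at some $t_0$, then $G$ would be constant, equal to $t_0$, on a nondegenerate interval $(s_1,s_2)$. Hence $\mu((s_1,s_2))=0$, and so $q=0$ a.e. on $(s_1,s_2)$, contradicting $q>0$ a.e. The same argument gives $f(a)=h$: otherwise $G=a$ on $(f(a),h]$, so $\mu$ would vanish there.

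\textbf{Step 2: $f$ is absolutely continuous.} This is the main obstacle. For a Borel set $N\subset[0,a]$ of Lebesgue measure zero, I would show that $f(N)$ is Lebesgue-null (Lusin's property (N)), and then invoke the Banach--Zarecki theorem, since $f$ is continuous, monotone and of bounded variation. The key relation is that for $t$ not among the countably many levels at which $f$ is constant, we have $G(f(t))\ge t$ and $t\ge G(f(t)-)$. Take $S=f(N)$, and suppose it had positive Lebesgue measure. The image of the Lebesgue measure under $f$ is $\mu$ (the standard quantile fact, exactly as in \eqref{key1}). Then $\mu(S\cap\{q>0\})\geq\int_S q>0$, while $\mu(S)=|f^{-1}(S)|$, and $f^{-1}(S)$ equals $N$ up to the flat pieces of $f$. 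I need to check that those flat pieces map to countably many points of $S$, namely the atoms of $\mu$, which carry no Lebesgue mass. Hence $|f^{-1}(S)\setminus\text{flat}|\le|N|=0$, so $\mu$ restricted to $S$ minus countably many points is zero, contradicting $\int_S q>0$. With absolute continuity established, set $p=f'$ where it exists and $p=0$ elsewhere. Then $p\ge0$ is Borel, $\int_0^t p=f(t)$, and $\int_0^a p=h$.

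\textbf{Step 3: the formula.} For $s\in[0,h)$ we need $\inf\{t: f(t)>s\}=G(s)$. If $t>G(s)$, then $f(t)>s$ by the definition of $f$ and the right-continuity of $G$: indeed $G(s')\ge t$ forces $s'>s$, and in fact $s'\ge s''>s$ with $G(s'')\ge t$, using that $\{G\geq t\}$ is a closed interval $[f(t),h]$. Conversely, if $t\le G(s)$, then $f(t)\le s$. Together these give the equality.

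The delicate point is Step 2. There one must carefully use the hypothesis $q>0$ a.e. together with the correspondence between $\mu$ and the pushforward of Lebesgue measure under $f$. This is also the place where the proof shows that $f$ is singular only where it should be: the singular part $\theta$ of $\mu$ corresponds to the set where $p=0$, not to any singular part of $f$.
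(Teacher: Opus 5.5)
Your proof is correct, but it reaches absolute continuity by a different route from the paper. Note the notation swap: the paper's $g$ is your $f$. The paper proceeds in two stages.
\begin{itemize}
\item[$\bullet$] For $\theta=0$, $s\mapsto\mu([0,s])$ is a homeomorphism. The paper writes down the explicit density $p=1/(q\circ g)$ of its inverse $g$ and checks $\int_0^t p=g(t)$ with the change-of-variables identity \eqref{key1}.
\item[$\bullet$] In general, it compares $g$ with the inverse $g_0$ built from the absolutely continuous part $\mu_0$ alone. It shows $g(x)=g_0(\tilde x)$ with $\tilde y-\tilde x=\mu_0((g(x),g(y)))\le\mu((g(x),g(y)))\le y-x$. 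So $g$ is $g_0$ composed with a $1$-Lipschitz nondecreasing reparametrization, and the $\varepsilon$--$\delta$ modulus of absolute continuity of $g_0$ transfers directly to $g$.
\end{itemize}
You avoid any case distinction. Your quantile identity says Lebesgue measure on $[0,a]$ is pushed forward to $\mu$ by $f$. You combine it with the observation that $f$ is injective off its countably many flat pieces, which are exactly the preimages of atoms of $\mu$. This gives Lusin's property (N), and Banach--Zarecki finishes.

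Your argument is shorter and uniform in $\theta$. It also makes transparent where $q>0$ a.e. is used: without it, $f$ could carry a null set onto a set of positive length on which $q$ vanishes. You also supply proofs of the continuity of $f$ and of $f(a)=h$, which the paper only asserts. The price is reliance on Banach--Zarecki, plus a small measurability point. Take $N$ Borel, so that $f(N)$ is analytic; or replace $f(N)$ by a Borel subset of full measure, for which your argument runs verbatim.

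The paper's argument stays at the level of the definition of absolute continuity. In the case $\theta=0$ it also yields the explicit formula $p=1/(q\circ g)$, the inverse of the relation $q=1/(p\circ g)$ in Lemma \ref{analytic-lem1}.
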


\proof Consider first the case $\theta=0$, and set for every $s\in[0,h]$,
$$f(s)=\mu([0,s])=\int_0^s q(r)\,\dd r.$$
Also set, for $t\in[0,a]$,
$$g(t)=\inf\{s\in[0,h]:f(s)=t\}.$$
Since  $q(t)>0$ a.e., $f$ is an increasing homeomorphism from $[0,h]$ onto $[0,a]$, and $g$ is the inverse bijection. 
By formula \eqref{key1} (with $p$ replaced by $q$), we have, for every nonnegative Borel function $\varphi:[0,a]\la \R_+$, 
\begin{equation}
\label{key2}
\int_0^a \varphi\circ g(r)\,\dd r= \int_0^h \varphi(r)\,q(r)\,\dd r.
\end{equation}
Set ${\displaystyle p(r)=\frac{1}{q\circ g(r)}}$ for $r\in[0,a]$. Fix $t\in[0,a]$ and apply \eqref{key2} with ${\displaystyle\varphi(r)=\mathbf{1}_{[0,g(t))}(r)\,\frac{1}{q(r)}}$. We get
$$\int_0^t p(r)\,\dd r=\int_0^a \mathbf{1}_{\{g(r)<g(t)\}} \,\frac{1}{q\circ g(r)}\,\dd r=\int_0^h \mathbf{1}_{\{r<g(t)\}} \,\frac{q(r)}{q(r)}\,\dd t=g(t).$$
Finally, since $\mu([0,s])=f(s)=\inf\{t\in[0,a]:g(t)>s\}$, for every $s\in[0,h)$, we get the desired formula. 

Consider then the general case. We keep the same notation $f(s)=\mu([0,s])$ and now set $g(t)=\inf\{s\in[0,h]:f(s)\geq t\}$ for $t\in[0,a]$. Again, 
$g(a)=h$, $g$ is continuous and
nondecreasing (but not a homeomorphism if $\theta$ has atoms), and we have $f(s)=\inf\{t\in[0,a]:g(t)>s\}$ for every $s\in[0,h)$. So the proof will be
complete if we can verify that $g$ is absolutely continuous. 

To this end, set $\mu_0(\dd s)=\mathbf{1}_{[0,h]}(s)\,q(s)\,\dd s$ and $a_0=\mu_0([0,h])$, and, for every $t\in[0,a_0]$,
$$g_0(t)=\inf\{s\in[0,h]:\mu_0([0,s])=t\}.$$
Note that $g_0(a_0)=h=g(a)$.
By the first part of the proof, we know that $g_0$ is absolutely continuous. We can then easily show that the same property holds for $g$. Indeed, let
$x,y\in[0,a]$ with $x<y$. Recalling that $g_0$ is a bijection, we 
can find $\tilde x,\tilde y\in[0,a_0]$ such that $\tilde x\leq \tilde y$ and $g_0(\tilde x)=g(x)$, $g_0(\tilde y)=g(y)$.
It follows that
\begin{align*}
\tilde y-\tilde x&=\mu_0([0,g_0(\tilde y)])-\mu_0([0,g_0(\tilde x)])\\
&=\mu_0([0,g(y)])-\mu_0([0,g(x)])\\
&=\mu_0((g(x),g(y)))\\
&\leq \mu((g(x),g(y)))\\
&=\mu([0,g(y)))-\mu([0,g(x)])\\
&\leq y-x.
\end{align*}
Let $\ve>0$. Since $g_0$ is absolutely continuous, we can find $\delta>0$ such that, whenever $(r_i,s_i)_{i\in I}$
are disjoint subintervals of $[0,h_0]$ with $\sum_{i\in I} (s_i-r_i)<\delta$ we have also $\sum_{i\in I} (g_0(s_i)-g_0(r_i))<\ve$. 
Suppose then that $(x_i,y_i)_{i\in I}$ are disjoint subintervals of $[0,h]$ such that
$\sum_{i\in I} (y_i-x_i)<\ve$. By the preceding considerations, we can find disjoint subintervals $(\tilde x_i,\tilde y_i)_{i\in I}$ of $[0,h_0]$
such that, for every $i\in I$, $g_0(\tilde x_i)=g(x_i)$, $g_0(\tilde y_i)=g(y_i)$, and $\tilde y_i-\tilde x_i\leq y_i-x_i$. In particular,
$\sum_{i\in I} (\tilde y_i-\tilde x_i)\leq \sum_{i\in I} (y_i-x_i)<\delta$ and 
$$\sum_{i\in I} (g(y_i)-g(x_i))=\sum_{i\in I} (g_0(\tilde y_i)-g_0(\tilde  x_i)) <\ve$$
by our choice of $\delta$. This shows that $g$ is absolutely continuous and completes the proof. \endproof

\section{Large deviations in the Gromov-Hausdorff-Prohorov topology}

\label{sec:LDP2}

Recall our notation $\cc$ for the set of all continuous function $\omega:[0,1]\la \R_+$ such that $\omega(0)=\omega(1)=0$. With 
any $\omega\in\cc$, we can associate the tree $\t_\omega$ coded by $\omega$, and the probability measure $\mu_\omega$ on $\t_\omega$
defined as the pushforward of $\mathbf{1}_{[0,1]}(t)\,\dd t$ under the mapping $t\mapsto p_{(\omega)}(t)$. 

We write $\T_{\w}$ for the space of all pairs $(\t,\mu)$ where $\t$ is a tree and $\mu$ is a probability measure on $\t$ (again two pairs $(\t,\mu)$ and $(\t',\mu')$ are identified 
if there exists a root-preserving isometry $F$ from $\t$ onto $\t'$ such that the pushforward of $\mu$ under $F$ is $\mu'$). The space 
$\T_{\w}$ is equipped with the
Gromov-Hausdorff-Prohorov topology (see e.g.~\cite[Section 6]{Mie}).

The following lemma is well known, but we provide a proof for the sake of completeness. 

\begin{lemma}
\label{contiGHP}
The mapping $\omega\mapsto (\t_\omega,\mu_\omega)$ is continuous from $\cc$ into $\T_{\w}$. 
\end{lemma}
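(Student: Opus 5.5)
We will prove continuity by building an explicit correspondence together with a coupling of the measures, and then applying the standard bound on the Gromov-Hausdorff-Prohorov distance in terms of the distortion of a correspondence and a coupling, in the form given in \cite[Section 6]{Mie}. Fix $\omega,\omega'\in\cc$ with $\|\omega-\omega'\|\leq \delta$. The natural correspondence between $\t_\omega$ and $\t_{\omega'}$ is
$$\mathcal{R}=\{(p_{(\omega)}(t),p_{(\omega')}(t)):t\in[0,1]\}.$$
It pairs the roots, since $p_{(\omega)}(0)$ and $p_{(\omega')}(0)$ are the two roots. The natural coupling is the pushforward $\nu$ of Lebesgue measure on $[0,1]$ under $t\mapsto (p_{(\omega)}(t),p_{(\omega')}(t))$. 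Its marginals are exactly $\mu_\omega$ and $\mu_{\omega'}$, and it is supported on $\mathcal{R}$, so $\nu(\mathcal{R}^c)=0$.

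The distortion of $\mathcal{R}$ is controlled exactly as in Lemma 2.3 of \cite{DLG}. For $s,t\in[0,1]$,
$$|d_{(\omega)}(s,t)-d_{(\omega')}(s,t)|\leq |\omega(s)-\omega'(s)|+|\omega(t)-\omega'(t)|+2\,\Big|\min_{[s\wedge t,s\vee t]}\omega-\min_{[s\wedge t,s\vee t]}\omega'\Big|\leq 4\delta.$$
Hence $\mathrm{dis}(\mathcal{R})\leq 4\delta$.

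We then invoke the characterization, valid for compact metric spaces, that
$$d_{GHP}\big((\t,\mu),(\t',\mu')\big)\leq \inf\max\Big(\tfrac12\mathrm{dis}(\mathcal{R}),\,\nu(\mathcal{R}^c)\Big),$$
where the infimum runs over correspondences $\mathcal{R}$ containing the pair of roots and couplings $\nu$ of $\mu$ and $\mu'$. This holds up to the constant conventions of whatever version of $d_{GHP}$ is used in \cite{Mie}. With our choices the right-hand side is at most $2\delta$, which gives continuity and in fact the Lipschitz property.

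If one prefers to avoid quoting that characterization, the argument can be done by hand. Realize both trees in the disjoint union $Z=\t_\omega\sqcup\t_{\omega'}$ with the metric
$$d_Z(x,x')=\inf_{(y,y')\in\mathcal{R}}\big(d_{(\omega)}(x,y)+2\delta+d_{(\omega')}(y',x')\big),$$
the usual gluing along a correspondence. One has to check that $d_Z$ is a metric restricting to the original distances; this is the standard check, and it uses that $\mathrm{dis}(\mathcal{R})\leq 4\delta$. In $Z$, the Hausdorff distance between the two trees is at most $2\delta$, the roots are at distance $2\delta$, and every pair in $\mathcal{R}$ is at distance $2\delta$. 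So the coupling $\nu$ is carried by pairs at distance $2\delta$, and the Prohorov distance between $\mu_\omega$ and $\mu_{\omega'}$ in $Z$ is at most $2\delta$.

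The only delicate point is matching the precise definition of the Gromov-Hausdorff-Prohorov distance in \cite{Mie}, since some versions use an embedding and others use correspondences. The explicit embedding $Z$ above handles either version, so I expect no real obstacle beyond verifying the triangle inequality for $d_Z$.
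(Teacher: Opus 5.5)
Your proposal is correct and is essentially the paper's proof: you use the same correspondence $\mathcal{R}$ and coupling $\nu$ (the pushforward of Lebesgue measure under $t\mapsto(p_{(\omega)}(t),p_{(\omega')}(t))$), the distortion bound $4\|\omega-\omega'\|$ from Lemma 2.3 of \cite{DLG}, and Proposition 6 of \cite{Mie} to get a GHP distance of at most $2\|\omega-\omega'\|$. Your observation that $\mathcal{R}$ contains the pair of roots, together with the explicit gluing along $\mathcal{R}$, handles the pointed setting; the paper covers that point only by remarking that Miermont's proof carries over unchanged.
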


\proof Let $\omega,\omega'\in\cc$ and let $\r$ be the correspondence between $\t_\omega$ and $\t_{\omega'}$ defined by
$$\r=\{(p_{(\omega)}(t),p_{(\omega')}(t)):t\in[0,1]\}.$$
Also let $\nu$ be the probability measure on $\t_\omega\times \t_{\omega'}$ defined as the pushforward of $\mathbf{1}_{[0,1]}(t)\,\dd t$ under the mapping $t\mapsto (p_{(\omega)}(t),p_{(\omega')}(t))$.
Trivially, $\nu$ is supported on $\r$, and the pushforward of $\nu$ under the projection $(x,y)\mapsto x$ (resp. the projection $(x,y)\mapsto y$) is $\mu_\omega$ (resp. $\mu_{\omega'}$). Then 
Proposition 6 in \cite{Mie} shows that the Gromov-Hausdorff-Prohorov distance between $(\t_\omega,\mu_\omega)$ and $(\t_{\omega'},\mu_{\omega'})$ is bounded above by $1/2$ times the distortion of $\r$ (\cite{Mie} does not deal with {\it pointed} spaces, but the proof in that case is exactly the same). Since the distortion of $\r$ is bounded above by $4\|\omega-\omega'\|$
(see Lemma 2.3 in \cite{DLG}), the desired result follows.
\endproof

We now turn to the proof of Theorem \ref{Schilder-measure}. To simplify notation, we write $\Theta(\omega)=(\t_\omega,\mu_\omega)$ for $\omega\in\cc$. Since $\Theta$ is continuous (Lemma \ref{contiGHP}), Theorem \ref{Schilder-excu} and an application of the contraction principle show that the
law of $(\t_{\ve\be}, \mu_{\ve \be})=\Theta(\ve\be)$ also satisfies a large deviation principle with good rate function
\begin{equation}
\label{rate-measure}
J(\t,\mu)=\inf\{I_1(\omega):\omega\in\cc,\,\Theta(\omega)=(\t,\mu)\},
\end{equation}
where $\inf\varnothing=\infty$. So the issue is to compute $J(\t,\mu)$. Note that we can replace $\omega\in\cc$ by $\omega\in H$ in the last display, since
$I_1(\omega)=\infty$ if $\omega\notin H$.  In what follows, we systematically exclude the case $\t=\{\rho_\t\}$ where \eqref{rate-measure}
immediately gives $J(\t,\delta_{\rho_\t})=0$.  We then start with a simple case.

\begin{proposition}
\label{rate-simple}
Let $h>0$ and let $\mu$ be a probability measure on $[0,h]$. Suppose that $\t$ is the segment $[0,h]$ rooted at $0$. Then
$$J(\t,\mu)=2\int_0^h \frac{\dd s}{q(s)},$$
where $q(s)\,\dd s$ is the absolutely continuous part in the Lebesgue decomposition of the measure $\mu$
(with respect to Lebesgue measure on $[0,h]$).
\end{proposition}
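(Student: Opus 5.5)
The plan is to prove the two inequalities separately. The lower bound comes from a co-area (Banach indicatrix) argument combined with Cauchy–Schwarz. The upper bound comes from an explicit ``up-and-down'' coding built with Lemma \ref{analytic-lem2} and evaluated with Lemma \ref{analytic-lem1}. Throughout, recall from \eqref{rate-measure} that $J(\t,\mu)=\inf\{\frac12\int_0^1\dot\omega(t)^2\,\dd t:\omega\in H,\ \Theta(\omega)=(\t,\mu)\}$. Since $\t$ is a segment rooted at $0$, the condition $\Theta(\omega)=(\t,\mu)$ means two things: $\max\omega=h$, and $\mu$ is the occupation measure of $\omega$, namely the pushforward of $\mathbf 1_{[0,1]}(t)\,\dd t$ under $t\mapsto\omega(t)$. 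This holds because $p_{(\omega)}(t)$ is the point at height $\omega(t)$ of the segment.

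\emph{Lower bound.} Let $\omega\in H$ code $(\t,\mu)$.
\begin{itemize}
\item[$\bullet$] Split $[0,1]$ into $Z=\{t:\dot\omega(t)=0\}$ and its complement. The pushforward of Lebesgue measure restricted to $Z$ under $\omega$ is singular with respect to Lebesgue measure; this is the standard fact that $\omega(Z)$ is Lebesgue-null for absolutely continuous $\omega$.
\item[$\bullet$] On $Z^c$, the area formula for absolutely continuous functions gives, for nonnegative Borel $\varphi$,
$$\int_{Z^c}\varphi(\omega(t))\,\dd t=\int_0^h\varphi(s)\sum_{t\in\omega^{-1}(s)\cap Z^c}\frac{1}{|\dot\omega(t)|}\,\dd s,$$
restricting to the full-measure set of $t$ where $\dot\omega(t)$ exists.
\item[$\bullet$] It follows that $q(s)\ge\sum_{t\in\omega^{-1}(s)\cap Z^c}|\dot\omega(t)|^{-1}$ a.e.
\item[$\bullet$] The same formula with $\varphi\equiv1$ and weight $|\dot\omega|^2$ gives $\int_0^1\dot\omega^2\,\dd t\ge\int_0^h\sum_{t\in\omega^{-1}(s)\cap Z^c}|\dot\omega(t)|\,\dd s$.
\item[$\bullet$] For a.e. $s\in(0,h)$, the level set contains at least two points of $Z^c$, one on the way up before the maximum and one on the way down after it. This is because the images of $Z$ and of the non-differentiability set are null.
\item[$\bullet$] Cauchy–Schwarz in the form $\sum_i x_i\ge N^2/\sum_i x_i^{-1}$ with $N\ge2$ then gives $\sum|\dot\omega|\ge 4/q(s)$.
\end{itemize}
Hence $\int_0^1\dot\omega^2\ge4\int_0^h\dd s/q(s)$, so $J(\t,\mu)\ge2\int_0^h\dd s/q(s)$. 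In particular, $J=\infty$ whenever this integral is infinite, for instance when $q=0$ on a set of positive measure.

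\emph{Upper bound.} Assume $\int_0^h\dd s/q<\infty$, so that $q>0$ a.e.
\begin{itemize}
\item[$\bullet$] Apply Lemma \ref{analytic-lem2} to the measure $\mu/2$, which has mass $a=1/2$ and density $q/2$. This yields $p\ge0$ on $[0,1/2]$ with $\int_0^{1/2}p=h$ and $\mu/2([0,s])=\inf\{t:f(t)>s\}$, where $f(t)=\int_0^tp$.
\item[$\bullet$] The distribution-function identity says that the pushforward of $\mathbf 1_{[0,1/2]}\,\dd t$ under $f$ is $\mu/2$.
\item[$\bullet$] Define $\omega(t)=f(t)$ for $t\le1/2$ and $\omega(t)=f(1-t)$ for $t\ge1/2$. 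Then $\omega\in H$, $\omega$ is monotone on each half with maximum $h$, so it codes the segment $[0,h]$, and its occupation measure is $2\cdot\mu/2=\mu$.
\item[$\bullet$] By Lemma \ref{analytic-lem1}, applied to $p$ with $a=1/2$ and the pushforward $\mu/2$ whose density is $q/2$,
$$\int_0^1\dot\omega^2=2\int_0^{1/2}p^2=2\int_0^h\frac{2\,\dd s}{q(s)}=4\int_0^h\frac{\dd s}{q(s)}.$$
\end{itemize}
Therefore $J\le2\int_0^h\dd s/q$, which matches the lower bound.

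The main obstacle I expect is the careful justification of the lower bound. It requires three ingredients: the area formula for absolutely continuous (Sobolev) functions, the fact that the set where $\dot\omega=0$ contributes only a singular part (so that it can only decrease $q$), and the claim that a.e. level carries at least two ``regular'' crossings. I would handle these via the Banach indicatrix theorem together with the null-image property (Lusin's N property) of absolutely continuous functions.
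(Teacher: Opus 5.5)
Your proof is correct. The upper bound is exactly the paper's construction: Lemma \ref{analytic-lem2} applied to $\frac12\mu$, the symmetric coding $\omega(t)=\omega(1-t)=\int_0^t p$, and evaluation via Lemma \ref{analytic-lem1}. Your lower bound, however, takes a different route from the paper's.

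The paper first notes that a coding function of a segment must be nondecreasing on $[0,a]$ and nonincreasing on $[a,1]$. It writes the two halves as $\int p_1$ and $\int p_2$ and applies the elementary Lemma \ref{analytic-lem1} to each half. This gives the exact identities $\int_0^a p_1^2=\int_0^h \dd s/q_1$ and $\int_0^{1-a}p_2^2=\int_0^h \dd s/q_2$, where $q_1,q_2$ are the densities of the absolutely continuous parts of the ``up'' and ``down'' occupation measures. It then concludes with $1/q_1+1/q_2\ge 4/(q_1+q_2)$ and $q=q_1+q_2$.

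Your argument is the pointwise, level-by-level version of the same estimate. For a.e.\ $s$, your two regular crossings $t_1<a<t_2$ satisfy $q_1(s)=1/|\dot\omega(t_1)|$ and $q_2(s)=1/|\dot\omega(t_2)|$, and your Cauchy--Schwarz step is precisely that convexity inequality. (Your inequality $q(s)\ge\sum 1/|\dot\omega(t)|$ is in fact an equality, since $\omega(Z\cup N)$ is null, but the inequality is all you need.)

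What your route buys: it never needs unimodality of $\omega$ or the monotone decomposition, and it tolerates any number of crossings per level. That makes it closer to an argument that could be run directly on a general tree, where every skeleton point is visited at least twice by the contour.

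What it costs: you import the area formula for absolutely continuous functions, Lusin's (N) property, and the null image of the critical set. The paper instead gets both directions from one hand-proved lemma, which your upper bound relies on anyway.
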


%\rem By the Cauchy-Schwarz inequality,
%$$h\leq \Big(\int_0^h q(s)\,\dd s\Big)^{12}\Big(\int_0^h \frac{\dd s}{q(s)}\Big)^{1/2} \leq \Big(\int_0^h \frac{\dd s}{q(s)}\Big)^{1/2} ,$$
%so that $J(\t,\mu)\geq 2h^2=2\,L(\t)^2$. 

\proof
Let us first assume that there exists a function $\omega\in H$ such that $\Theta(\omega)=(\t,\mu)$.
The fact that $\omega$ is a coding function of $\t=[0,h]$ implies that there exists $a\in(0,1)$ such that $\omega(a)=h$ and
$\omega$ is both nondecreasing on $[0,a]$ and nonincreasing on $[a,1]$. Since $\omega\in H$, it follows that
we can find two square integrable nonnegative Borel functions $p_1$ and $p_2$ defined respectively on $[0,a]$ and on $[0,1-a]$, such that
\begin{align*}
&\forall t\in[0,a], \ \omega(t)=\int_0^t p_1(r)\,\dd r\\
&\forall t\in[0,1-a], \ \omega(1-t)=\int_0^t p_2(r)\,\dd r.
\end{align*}
Let $\mu_1$ be the pushforward of $\mathbf{1}_{[0,a]}(t)\,\dd t$ under $\omega$. 
%Set
%$$g_1(s)=\inf\{t\in[0,a]:\int_0^t p_1(r)\,\dd r>s\}=\inf\{t\in[0,a]:\omega(t)>s\},$$
%for every $s\in[0,h)$, 
%and $g_1(h)=a$. Then, we have $g_1(s)=\mu_1([0,s])$. 
By Lemma \ref{analytic-lem1}, we have
$$\int_0^a p_1(t)^2\,\dd t=\int_0^h \frac{\dd s}{q_1(s)},$$
where $\mathbf{1}_{[0,h]}(s)\,q_1(s)\dd s$ is the absolutely continuous part of the measure $\mu_1$. 
%Note in particular that $\mu_1\leq \mu_{\omega}=\mu$
%implies $q_1\leq q$ and therefore $\int_0^h \frac{\dd s}{q(s)}\leq \int_0^h \frac{\dd s}{q_1(s)}<\infty$ (by the last display) showing that if $\int_0^h \frac{\dd s}{q(s)}=\infty$,
%there cannot exist a function $\omega\in H$ such that $\Theta(\omega)=(\t,\mu)$). 

Similarly, if $\mu_2$ is the pushforward of $\mathbf{1}_{[a,1]}(t)\,\dd t$ under $\omega$, we have
$$\int_0^{1-a} p_2(t)^2\,\dd t=\int_0^h \frac{\dd s}{q_2(s)},$$
where $\mathbf{1}_{[0,h]}(s)\,q_2(s)\dd s$ is the absolutely continuous part of the measure $\mu_2$. By combining the last two displays, we get
$$\int_0^1 \dot\omega(t)^2\,\dd t=\int_0^a p_1(t)^2\,\dd t + \int_0^{1-a} p_2(t)^2\,\dd t 
=\int_0^h \frac{\dd s}{q_1(s)} + \int_0^h \frac{\dd s}{q_2(s)}\geq 4\int_0^h \frac{\dd t}{q_1(t)+q_2(t)},$$
by a convexity argument. Since $q=q_1+q_2$, we have obtained
$$\frac{1}{2} \int_0^1 \dot\omega(t)^2\,\dd t\geq 2\int_0^h\frac{\dd s}{q(s)}$$
and it follows that
$$J(\t,\mu)\geq 2\int_0^h \frac{\dd s}{q(s)}.$$
Trivially, this inequality remains valid if there exists no function $\omega\in H$ such that $\Theta(\omega)=(\t,\mu)$, since
we have then $J(\t,\mu)=\infty$. 

To get the reverse inequality, we may assume that $\int_0^h \frac{\dd s}{q(s)}<\infty$, which implies that $q(s)>0$ a.e. We then apply Lemma
\ref{analytic-lem2} to the measure $\frac{1}{2}\mu$, whose total mass is $\frac{1}{2}$. It follows that there exists a
nonnegative Borel function $p:[0,\frac{1}{2}]\la \R_+$ such that $\int_0^{1/2} p(r)\,\dd r=h$ and, for every $s\in[0,h)$,
\begin{equation}
\label{reverse-tec}
\frac{1}{2}\mu([0,s])=\inf\{t\in[0,\frac{1}{2}]:\int_0^t p(r)\,\dd r>s\}.
\end{equation}
We then define $\omega=(\omega(t))_{t\in [0,1]}$ by setting for every $t\in[0,\frac{1}{2}]$,
$$\omega(t)=\omega(1-t)=\int_0^t p(r)\,\dd r.$$
From \eqref{reverse-tec}, we see that $\frac{1}{2}\mu$ is the pushforward of $\mathbf{1}_{[0,1/2]}(s)\dd s$ under $\omega$.

Since $p\geq 0$ and $\omega(\frac{1}{2})=h$, $\omega$ is a coding function of $\t$, and moreover $\omega\in H$
since
$$\int_0^1 \dot \omega(t)^2\,\dd t= 2\int_0^{1/2} p(t)^2\,\dd t= 4 \int_0^h\frac{\dd s}{q(s)}<\infty$$
using Lemma \ref{analytic-lem1} (with $a=\frac{1}{2}$, $f=\omega$, and $\mu$ replaced by $\frac{1}{2}\mu$ ) in the last equality. Furthermore,we have
$\mu_\omega=\mu$, and thus $\Theta(\omega)=(\t,\mu)$. 
We then conclude from
\eqref{rate-measure} and the last display that
$$J(\t,\mu)\leq \frac{1}{2}\int_0^1 \dot \omega(t)^2\,\dd t = 2\int_0^h\frac{\dd s}{q(s)}.$$
This completes the proof. \endproof

We now complete the proof of Theorem \ref{Schilder-measure}.

\smallskip
\noindent{\it First step.} In this first step, we prove formula \eqref{key-formu} when $\t$
is a simple tree and $\mu(\b_\t\cup\ll_\t)=0$.
Let $S$ be an elementary segment of $\t$. Then, we have $S=\llbracket x,y\rrbracket$, where $y$ is a descendant of $x$. Moreover,
both $x$ and $y$ must belong to $\b_\t\cup\ll_\t$. We write $S^\circ=\rrbracket x,y\llbracket$ for the interior of $S$.
Let $\omega$ be a coding function of $\t$. Then, it is not hard to see that there exist two disjoint open subintervals $(t_S,u_S)$ and $(v_S,w_S)$ of $[0,1]$ 
such that the following holds:
\begin{itemize}
\item[$\bullet$] $p_{(\omega)}(t_S)=p_{(\omega)}(w_S)=x$ and $p_{(\omega)}(u_S)=p_{(\omega)}(v_S)=y$;
\item[$\bullet$] the property $p_{(\omega)}(t)\in S^\circ$ holds if and only if
$t\in(t_S,u_S)\cup(v_S,w_S)$;
\item[$\bullet$] the function $\omega$
is monotone nondecreasing on $(t_S,u_S)$ and monotone nonincreasing on $(v_S,w_S)$. 
\end{itemize}

%Let $\mu$ be a probability measure on the simple tree $\t$. Our goal is to prove formula \eqref{key-formu}
%for $J(\t,\mu)$. For simplicity, we consider first the case where $\mu(\b_\t\cup\ll_\t)=0$.
Suppose that $\omega\in H$ is such that $\Theta(\omega)=(\t,\mu)$, and in particular $\t_\omega=\t$. Fix an elementary segment $S=\llbracket x,y\rrbracket$ of  $\t$, then the preceding observations show that the pushforward of Lebesgue measure on $[t_S, u_S]\cup [v_S,w_S]$
under $p_{(\omega)}$
must be the restriction of $\mu_\omega=\mu$ to $\llbracket x,y\rrbracket$. A direct adaptation of the proof of Proposition \ref{rate-simple} 
(we omit the details) implies that
\begin{equation}
\label{LB-simple}
\frac{1}{2}\int_{(t_S,u_S)\cup(v_S,w_S)} \dot\omega(t)^2\,\dd t\geq 2\int_{\llbracket x,y\rrbracket}\frac{\lambda_\t(\dd z)}{q_\t(z)},
\end{equation}
where $q_\t(z)\lambda_\t(\dd z)$ is the absolutely continuous part in the Lebesgue decomposition 
of $\mu$ with respect to the length measure $\lambda_\t$. Let $\S_\t$ be the collection of all elementary segments of $\t$ Observe that
$\t\backslash (\b_\t\cup \ll_\t)$ is the disjoint union of the interiors $S^\circ$ for all $S\in\S_\t$. By summing the preceding lower bound over all choices
of $S\in\S_\t$, we get
\begin{equation}
\label{LB-simple2}
I_1(\omega)=\frac{1}{2}\int_0^1\dot\omega(t)^2\,\dd t\geq 2\int_{\t}\frac{\lambda_\t(\dd z)}{q_\t(z)}.
\end{equation}
The lower bound for $J(\t,\mu)$ in formula \eqref{key-formu} then follows from \eqref{rate-measure}.

To get the corresponding upper bound, we may assume that $\int_0^1 q_\t(z)^{-1}\lambda_\t(\dd z)<\infty$. Then, given an elementary segment $S=\llbracket x,y\rrbracket$ of $\t$
(where as previously $y$ is a descendant of $x$),  we can adapt the end of the proof of Proposition \ref{rate-simple} to
see that  equality holds in \eqref{LB-simple} provided that the following properties are satisfied:
\begin{itemize}
\item $u_S=t_S+\frac{\mu(S)}{2}$, $w_S=v_S+\frac{\mu(S)}{2}$;
\item for every 
$r\in[0,\frac{\mu(S)}{2}]$,
$$\omega(u_S+r)-\omega(u_S)=\omega(w_S-r)-\omega(w_S)=\int_0^r p(t)\,\dd t$$
where the (nonnegative) function $(p_S(t))_{0\leq t\leq \mu(S)/2}$ satisfies $\int_0^{\mu(S)/2} p_S(t)\,\dd t=d_\t(x,y)$, and, for every $r\in [0,d_\t(x,y))$,
$$\frac{1}{2}\mu(\llbracket x,x_r\rrbracket)=\inf\{t\in [0,\frac{\mu(S)}{2}]: \int_0^t p_S(u)\,\dd u>r\},$$
where $x_r$ denotes the unique point of $\llbracket x,y\rrbracket$ at distance $r$ from $x$. 
\end{itemize}

We leave it to the reader to verify that we can construct the coding function $\omega\in H$ so that the preceding
properties hold for every elementary segment $S$, and it follows that equality also holds in \eqref{LB-simple2} in that case. Using \eqref{rate-measure}, we get that 
formula \eqref{key-formu} is verified 
when $\t$ is a simple tree and the measure $\mu$ gives no mass to $\b_\t\cup\ll_\t$. 

\medskip
\noindent{\it Second step}. Still assuming that $\t$ is a simple tree, consider now the general case where $\mu(\b_\t\cup\ll_\t)$ may be nonzero. 
We let $\mu_0$ be the restriction of $\mu$ to $\t\backslash(\b_\t\cup\ll_\t)$ and set
$$\alpha=\mu_0(\t)= 1-\mu(\b_\t\cup\ll_\t).$$
We may exclude the case where $\alpha=0$, because, for any coding function $\omega$ of $\t$, $\mu_\omega$
must give positive mass to $\t\backslash(\b_\t\cup\ll_\t)$ and thus the condition $\mu_\omega(\b_\t\cup\ll_\t)=1$ can never be
satisfied (the formula for $J(\t,\mu)$ is trivial in that case). 

Let $\omega\in H$ be such that $(\t_\omega,\mu_\omega)=(\t,\mu)$. We define $\omega^*_0=(\omega^*_0(t))_{0\leq t\leq \alpha}$ by ``removing'' the flat pieces 
of the graph  of $\omega$ corresponding to points of $\b_\t\cup\ll_\t$. More precisely, for every $s\in[0,\alpha]$,
$$\omega^*_0(s)=\omega\Big(\inf\Big\{ r\in[0,1]:\int_0^r \mathbf{1}_{\t\backslash(\b_\t\cup\ll_\t)}(p_{(\omega)}(t))\,\dd t\geq s\Big\}\Big).$$
We then set $\omega_0(t)=\omega^*_0(\alpha t)$ for every $t\in[0,1]$. It is easy to verify that 
$\omega_0$ is still a coding function of $\t$, and moreover 
$$\mu_{\omega_0}=\frac{\mu_0}{\alpha}.$$
We have $\dot\omega_0(t)=\alpha\,\dot\omega^*_0(\alpha t)$ for $t\in[0,1]$, and therefore
$$\int_0^1 \dot\omega_0(t)^2\,\dd t= \alpha^2\int_0^1 \dot\omega^*_0(\alpha t)^2\,\dd t=\alpha \int_0^\alpha  \dot\omega^*_0(u)^2\,\dd u=\alpha\int_0^1 \dot\omega(u)^2\,\dd u,$$
because obviously $\dot\omega(u)=0$ on the flat pieces of the graph of $\omega$. 

Write $q_0(x)\lambda_\t(\dd x)$ for the absolutely continuous part of $\frac{1}{\alpha}\mu_0(\dd x)$. Then, $q_0=\frac{1}{\alpha}q_\t$, and since
$\mu_0$ puts no mass on $\b_\t\cup\ll_\t$, 
the first step of the proof gives
$$\frac{1}{2} \int_0^1 \dot\omega_0(t)^2\dd t\geq 2\int_\t \frac{\lambda_\t(\dd z)}{q_0(z)}= 2\alpha \int_\t \frac{\lambda_\t(\dd z)}{q_\t(z)}.$$
It follows that we have again
$$\frac{1}{2}\int_0^1\dot\omega(t)^2\,\dd t\geq 2 \int_{\t}\frac{\lambda_\t(\dd z)}{q_\t(z)}.$$
and so we have proved that
$$J(\t,\mu)\geq 2\int_\t \frac{\lambda_\t(\dd z)}{q_\t(z)}.$$

The proof of the upper bound is similar. Thanks to the first step of the proof, we can find $\omega_0\in H$
such that $\Theta(\omega_0)=(\t,\alpha^{-1}\mu_0)$ and $\frac{1}{2}\int_0^1 \dot\omega_0(t)^2\dd t=2\int q_0(z)^{-1} \lambda_\t(\dd z)$. We then define $\omega_0^*(t)=\omega_0(t/\alpha)$
for $0\leq t\leq \alpha$. By inserting ``flat pieces'' in the graph of $\omega^*_0$, we can construct $\omega\in H$ such that $\t_\omega=\t$, $\mu_\omega=\mu$ and $\omega^*_0$ is derived from $\omega$ by
the procedure explained above. Then we have
$$\frac{1}{2}\int_0^1\dot\omega(t)^2\,\dd t=\frac{1}{2\alpha} \int_0^1 \dot\omega_0(t)^2\,\dd t=\frac{2}{\alpha}\int \frac{\lambda_\t(\dd z)}{q_0(z)}= 2\int_\t \frac{\lambda_\t(\dd z)}{q_\t(z)}.$$
This completes the proof of formula \eqref{key-formu} in the case when $\t$ is a simple tree. 

\medskip
\noindent{\it Third step.} Consider now a general pair $(\t,\mu)\in\T_{\w}$ such that $\t$ has finite  length ($\lambda_\t(\t)<\infty$).
With a dense sequence $(x_n)_{n\geq 1}$ in $\t$, 
we can associate the simple trees $\t_{(n)}$ defined by formula \eqref{simple-approx}.
As we already noticed, $\t_{(n)}$ converges to $\t$ as $n\to\infty$
in the Hausdorff sense, and $\lambda_\t(\t\backslash \t_{(n)})$ converges to $0$ as $n\to\infty$.

Consider a connected component $C$ of the open set $\t\backslash \t_{(n)}$. Then, the closure of $C$ is a compact $\R$-tree, which contains a 
unique point $\rho_C$ at minimal distance from $\rho_\t$ (in such a way that $C$ is composed of descendants 
of $\rho_C$). We write $\Delta_n$ for the set of all connected components of $\t\backslash \t_{(n)}$ and we define a 
probability measure $\mu_{(n)}$ on $\t_{(n)}$ by setting
$$\mu_{(n)}=\mu_{|\t_{(n)}} + \sum_{C\in \Delta_n} \mu(C)\,\delta_{\rho_C},$$
where $\mu_{|\t_{(n)}}$ denotes the restriction of $\mu$ to $\t_{(n)}$. Again, it is easy to verify that $\mu_{(n)}$
converges weakly to $\mu$ as $n\to\infty$. Consequently, the pair $(\t_{(n)},\mu_{(n)})$ converges to $(\t,\mu)$
in the Gromov-Hausdorff-Prohorov sense, and since we know that the rate function $J$ is lower-semicontinuous,
we have
\begin{equation}
\label{lower-SC}
J(\t,\mu)\leq \liminf_{n\to\infty} J(\t_{(n)},\mu_{(n)}).
\end{equation}
The case of simple trees gives
$$J(\t_{(n)},\mu_{(n)})=2\int_\t \frac{1}{q_n(x)}\,\lambda_{\t_{(n)}}(\dd x),$$
where $q_n(x)\lambda_n(\dd x)$ is the absolutely continuous part of $\mu_{(n)}(x)$. Note that $\lambda_{\t_{(n)}}$
is the restriction of $\lambda_\t$ to $\t_{(n)}$. From the definition of
$\mu_{(n)}$ it is immediate that $q_n$ is just the restriction of $q_\t$ to $\t_{(n)}$. Hence
$$J(\t_{(n)},\mu_{(n)})=2\int_{\t_{(n)}}\frac{1}{q_\t(x)}\,\lambda_\t(\dd x) \build{\la}_{n\to\infty}^{} 2\int_\t \frac{1}{q_\t(x)}\,\lambda_\t(\dd x),$$
using the fact that $\mathbf{1}_{\t_{(n)}}(x) \uparrow 1$, $\lambda_\t(\dd x)$ a.e., since we know that
$\lambda_\t(\t\backslash\t_{(n)})\downarrow 0$ as $n\to\infty$. From the last display and \eqref{lower-SC}, we conclude that
$$J(\t,\mu)\leq 2\int_\t \frac{1}{q_\t(x)}\,\lambda_\t(\dd x).$$

In order to prove the corresponding lower bound for $J(\t,\mu)$, let $\omega\in H$
such that $\Theta(\omega)=(\t,\mu)$. For every $n\geq 1$, we observe that, for every $C\in \Delta_n$, the set 
$\{t\in[0,1]:p_{(\omega)}(t)\in C\}$ must be an open interval $(u_C,v_C)$ such that $\omega(u_C)=\omega(v_C)$ and 
$\omega(t)>\omega(u_C)$ for $t\in(u_C,v_C)$. Write $\jj_n$
for the union of all intervals $(u_C,v_C)$ for $C\in \Delta_n$, and define, for $t\in[0,1]$,
$$\omega_n(t)=\left\{
\begin{array}{ll}
\omega(t)&\hbox{if }t\in[0,1]\backslash \jj_n\\
\omega(u_C)\quad&\hbox{if }t\in (u_C,v_C),\; \hbox{with } C\in \Delta_n.
\end{array}
\right.
$$
Then $\omega_n\in\cc$. Moreover, 
we claim that $(\t_{\omega_n},\mu_{\omega_n})=(\t_{(n)},\mu_{(n)})$ (in the sense that $(\t_{\omega_n},\mu_{\omega_n})$ and $(\t_{(n)},\mu_{(n)})$
correspond to the same element of $\T_\w$). To justify the equality $\t_{\omega_n}=\t_{(n)}$, note that,
for every $r,s\in[0,1]$, the property $p_{(\omega_n)}(r)=p_{(\omega_n)}(s)$ holds if either $r$ and $s$ belong to the same interval $(u_C,v_C)$, 
for some $C\in 
\Delta_n$, or both $r$ and $s$ belong to $[0,1]\backslash \jj_n$ and $p_{(\omega)}(r)=p_{(\omega)}(s)$. Thanks to this
observation, we can construct a mapping $\psi_n$ from $\t_{\omega_n}=[0,1]/\!\sim_{\omega_n}$ onto $\t_{(n)}$ that maps the equivalence class
of an interval $(u_C,v_C)$ to $p_{(\omega)}(u_C)=\rho_C$, and every point $p_{(\omega_n)}(r)$ with $r\in [0,1]\backslash \jj_n$ to $p_{(\omega)}(r)$. It is easy to verify
that $\psi_n$ is an isometry, which gives the equality $\t_{\omega_n}=\t_{(n)}$. The identification of $\mu_{\omega_n}$ with $\mu_{(n)}$ follows by
similar arguments.

Then, it is straightforward to verify that,
for every $t\in[0,1]$,
$$\omega_n(t)=\int_0^t \mathbf{1}_{[0,1]\backslash \jj_n}(s)\,\dot\omega(s)\,\dd s$$
and thus $\omega_n\in H$ and 
$$\dot\omega_n(t)=\mathbf{1}_{[0,1]\backslash \jj_n}(t)\,\dot\omega(t).$$
In particular,
$$\frac{1}{2} \int_0^1 \dot\omega(t)^2\,\dd t\geq \frac{1}{2} \int_0^1 \dot\omega_n(t)^2\,\dd t\geq J(\t_{(n)},\mu_{(n)})
\build{\la}_{n\to\infty}^{} 2\int_\t \frac{1}{q_\t(x)}\,\lambda_{\t}(\dd x),$$
and, from \eqref{rate-measure}, we get
$$J(\t,\mu)\geq 2\int_\t \frac{1}{q_\t(x)}\,\lambda_\t(\dd x).$$
This completes the proof of Theorem \ref{Schilder-measure}. \endproof

\section*{Appendix: Proof of Theorem \ref{Schilder-excu}}

Let $\ddd$ denote the space of all continuous functions $\omega:[0,1]\la\R$ such that $\omega(0)=\omega(1)=0$, which is equipped with the supremum norm.
Also let $H'$ denote the subset of $\ddd$ consisting of all functions
$\omega\in \ddd$ such that there exists a square integrable function $\dot\omega:[0,1]\la\R$
such that $\omega(t)=\int_0^t \dot\omega(s)\,\dd s$, for every $t\in[0,1]$.

Write $\bb=(\bb_t)_{0\leq t\leq 1}$ for a standard Brownian bridge, which may be viewed as a random variable with values in $\ddd$. 
Then the laws 
of $\ve\,\bb$, $\ve>0$, in $\ddd$ satisfy a LDP (large deviation principle) 
 with speed $\ve^{-2}$ and good rate function
$$I_0(\omega)=\left\{
\begin{array}{ll}
\displaystyle{\frac{1}{2}}\int_0^1 \dot\omega(t)^2\,\dd t\quad&\hbox{if }\omega\in H',\\
\noalign{\smallskip}
\infty&\hbox{otherwise.}
\end{array}
\right.
$$
This is an immediate consequence of the classical Schilder theorem, noting that we may construct $\bb$ as $\bb_t=B_t-t\,B_1$, where $B$
is a standard linear Brownian motion, and the mapping $\omega\mapsto (\omega(t)-t\omega(1))_{0\leq t\leq 1}$ is continuous for the supremum norm. 

To relate the Brownian excursion to the Brownian bridge, we use the Vervaat transformation \cite{Ver}.
We define a mapping $\Gamma$ from $\ddd$ into $\cc$ as follows. If $\omega\in \ddd$, we
define $\Gamma(\omega)\in\cc$ by  $\Gamma(\omega)(t)=\omega(\{t_{min}+t\})-\omega(t_{min})$, where $t_{min}:=\min\{t\in[0,1]:\omega(t)=\min_{s\in[0,1]}\omega(s)\}$
and $\{x\}$ denotes the fractional part of $x$. Then, the Brownian excursion $\be$ may be constructed from the bridge $\bb$ by
$\be=\Gamma(\bb)$. However, we cannot directly apply the contraction principle because $\Gamma$ is not continuous on $\ddd$. Still we can proceed as
follows.

Let $K$ be a compact subset of $\cc$. Then, 
$$\P(\ve\be\in K)=\P(\Gamma(\ve\bb)\in K)=\P(\ve \bb\in \Gamma^{-1}(K))\leq \P(\ve \bb\in \ov{\Gamma^{-1}(K)}),$$
where we use the notation $\ov{A}$ for the closure of a set $A\subset\ddd$. By the LDP for the Brownian bridge, we have thus
\begin{equation}
\label{Schi3}\limsup_{\ve\to 0} \ve^2\log\P(\ve\be\in K)\leq -\inf_{\omega\in \ov{\Gamma^{-1}(K)}}I_0(\omega). 
\end{equation}
We now claim that
\begin{equation}
\label{Schi4}
\inf_{\omega\in \ov{\Gamma^{-1}(K)}}I_0(\omega)=\inf_{\omega\in \Gamma^{-1}(K)}I_0(\omega).
\end{equation}
To see this, let $(\omega_n)_{n\geq 1}$ be a sequence in $\Gamma^{-1}(K)$ that converges to $\omega_\infty$ in $\ddd$. We need to verify that
\begin{equation}
\label{Schi5}
I_0(\omega_\infty)\geq \inf_{\omega\in \Gamma^{-1}(K)}I_0(\omega).
\end{equation}
Write $\w_n=\Gamma(\omega_n)\in K$. Up to extracting a subsequence, we can assume that $\w_n$ converges to $\w_\infty$ in $\cc$ and $\w_\infty\in K$. We also know
that 
$\w_n(t)=\omega_n(\{t^n_{min}+t\})-\omega_n(t^n_{min})$, where $t^n_{min}\in[0,1]$ achieves the minimum of $\omega_n$. By extracting another subsequence if necessary, we can assume that
$t^n_{min}\la t_\infty$ as $n\to\infty$, and we get that the limit $\w_\infty$ of the sequence $(\w_n)$ is
$$\w_\infty(t)=\omega_\infty(\{t_{\infty}+t\})-\omega(t_{\infty})$$
and $\omega_\infty(t_\infty)=\min_{[0,1]}\omega_\infty$. Finally, one immediately verifies that $I_0(\w_\infty) = I_0(\omega_\infty)$, and moreover the (trivial) equality 
$\Gamma(\w_\infty)=\w_\infty$ gives $\w_\infty\in \Gamma^{-1}(K)$. The bound \eqref{Schi5} follows, which also gives \eqref{Schi4}. Finally, the LDP upper bound for compact sets follows from
\eqref{Schi3} and \eqref{Schi4}, noting that
$$\inf_{\omega\in \Gamma^{-1}(K)}I_0(\omega)=\inf_{\tilde\omega\in K}\Big(\inf\{I_0(\omega):\omega\in\ddd, \Gamma(\omega)=\tilde\omega\}\Big)=\inf_{\tilde\omega\in K} I_1(\tilde\omega).$$

Let us turn to the LDP lower bound for open sets. We write $\cc^*$ for the set of all $\omega\in\cc$ such that $\omega(t)>0$ for every $t\in(0,1)$, and $\ddd^*$ for the union of $\cc^*$
and of the set of all $\omega\in\ddd$ whose minimum is negative and is achieved at a unique point (necessarily in $(0,1)$). Note that $\Gamma$ maps $\ddd^*$ onto $\cc^*$, and $\ddd\backslash\ddd^*$ onto $\cc\backslash \cc^*$. 
Furthermore, $\Gamma$ is continuous at every point of $\ddd^*$. 

Let $G$ be an open subset of $\cc$. Then,
$$\P(\ve\be\in G)=\P(\ve\be\in G\cap \cc^*)=\P(\ve\bb\in \Gamma^{-1}(G\cap \cc^*)).$$
Since $\Gamma$ is continuous on $\ddd^*$, we can find an open subset $O$ of $\ddd$
such that $\Gamma^{-1}(G\cap \cc^*)=O\cap \ddd^*$. It follows that
$$\liminf_{\ve\to 0} \ve^2\log\P(\ve\be\in G)=\liminf_{\ve\to 0} \ve^2\log\P(\ve\bb\in O\cap \ddd^*)=\liminf_{\ve\to 0} \ve^2\P(\ve\bb\in O)\geq -\inf_{\omega\in O} I_0(\omega)
$$
and
$$\inf_{\omega\in O} I_0(\omega)\leq \inf_{\omega\in O\cap\ddd^*} I_0(\omega)=\inf_{\tilde \omega\in G\cap\cc^*} I_1(\tilde\omega)=\inf_{\tilde \omega\in G} I_1(\tilde\omega),$$
where the last equality holds because any $\tilde \omega\in G$ can be written as the limit of a sequence $(\tilde\omega_n)$ in $G\cap\cc^*$ such that $I_1(\tilde \omega_n)$
converges to $I_1(\tilde\omega)$. This completes the proof of the LDP upper bound for open sets. 

To complete the proof of Theorem \ref{Schilder-excu}, it only remains to verify the exponential tightness property, meaning that, for every $\alpha>0$, we can find a 
compact subset $K_\alpha$ of $\cc$ such that 
$$\limsup_{\ve \to 0} \ve^2\P(\ve\be\notin K_\alpha)\leq -\alpha.$$
Via the Vervaat transformation, this follows from the analogous result for the Brownian bridge, and we omit the details. \endproof

\medskip
\noindent{\bf Acknowledgments.} I thank Nicolas Curien and Rapha\"el Cerf for useful conversations.


\begin{thebibliography}{99}
\bibitem{Al}
{D. Aldous}, The continuum random tree I. {\it Ann. Probab.} 19, 1--28 (1991)

\bibitem{Al2}
{D. Aldous}, The continuum random tree III. {\it Ann. Probab.} 21, 248--289 (1993)

\bibitem{DZ}
{A. Dembo, O. Zeitouni}, Large Deviations Techniques and Applications. Springer, 2008

\bibitem{Duq}
{T. Duquesne},
The coding of compact real trees by real valued functions. arXiv:math/0604106 

\bibitem{DLG}
{T. Duquesne, J.-F. Le Gall},
Probabilistic and fractal aspects of L\'evy trees. {\it Probab. Theory Related Fields}  131, 553--603 (2005)

\bibitem{Evans}
{S.N. Evans},
Probability and Real Trees. Lecture Notes Math. 1920. Springer, 2007

\bibitem{Zurich}
 {J.-F. Le Gall}, Spatial Branching Processes, Random Snakes and Partial Differential Equations. 
 Birkha\"user, 1999

\bibitem{probasur} 
 {J.-F. Le Gall}, {Random trees and applications}.
{\it Probab. Surveys} {2}, 245--311 (2005)

\bibitem{Mie}
{G. Miermont},
Tessellations of random maps of arbitrary genus.
{\it Ann. Sci. Ec. Norm. Sup.} 42, 725--781 (2009)

\bibitem{Ru}
{W. Rudin},
Real and Complex Analysis. 
McGraw-Hill, 1974

\bibitem{Ver}
{W. Vervaat}, 
A relation between Brownian bridge and Brownian excursion. {\it Ann. Probab.} 10, 234--239 (1982)

\end{thebibliography}
\end{document}